\documentclass[12pt]{amsart}
\usepackage{amsmath,amsfonts,latexsym,graphicx,amssymb,url}
\usepackage{amsmath,txfonts,pifont,bbding,pxfonts,manfnt}
\usepackage{psfrag}
\usepackage{wrapfig, subfigure,graphicx}
\usepackage{hyperref}
\usepackage[active]{srcltx}
\usepackage{pdfsync}
\usepackage{amssymb}
\usepackage{amsthm}
\usepackage{amsmath}
\usepackage{graphicx,cases,datetime}
\usepackage{color}
\usepackage{soul, xcolor}
\usepackage{mathtools}
\usepackage{enumerate}
\usepackage{enumitem}

\usepackage[symbol]{footmisc}

\allowdisplaybreaks
\numberwithin{equation}{section}
\setlength{\headheight}{15pt}
\setlength{\topmargin}{10pt}
\setlength{\headsep}{30pt} 
\setlength{\textwidth}{15cm}
\setlength{\textheight}{21.5cm}
\setlength{\oddsidemargin}{1cm} 
\setlength{\evensidemargin}{1cm} 
\newcommand{\dist}{\text{dist}} 

\newcommand{\R}{{\mathbb R}} 

\def \S {{\mathcal S}}

\def \e {\varepsilon}

\def \v{\mathcal O}

\def \e {\mathbf e}
\def\t{\mathcal T}

\def\d{\overline D}
\def\m{\overline{\Omega}}

\def\b{\textbf{b}}

\renewcommand{\(}{\left(}
\renewcommand{\)}{\right)}

\newtheorem{theorem}{Theorem}[section]
\newtheorem{corollary}[theorem]{Corollary}
\newtheorem{lemma}[theorem]{Lemma}
\newtheorem{proposition}[theorem]{Proposition}
\newtheorem{definition}[theorem]{Definition}
\newtheorem{remark}[theorem]{Remark}




\interfootnotelinepenalty=10000
\begin{document}

\title[Near Field Refractor Problem, \today]{On the Numerical Solution of the \\Near Field Refractor Problem}
\author[C. E. Guti\'errez and H. Mawi]{Cristian E. Guti\'errez and Henok Mawi}
\thanks{\today. \\The first author was partially supported by NSF grant DMS--1600578, and the second author was partially supported by NSF grant HRD--1700236.}
\address{Department of Mathematics\\Temple University\\Philadelphia, PA 19122}
\email{gutierre@temple.edu}
\address{Department of Mathematics, Howard University, Washington, D.C. 20059}
\email{henok.mawi@howard.edu}

\begin{abstract}
A numerical scheme is presented to solve the one source near field refractor problem to arbitrary precision and it is proved that the scheme terminates in a finite number of iterations.
The convergence of the algorithm depends upon proving appropriate Lipschitz estimates for the refractor measure. 
The algorithm is presented in general terms and has independent interest.
\end{abstract}

\maketitle

\tableofcontents

\section{Introduction}
Let $\Omega \subset S^{n-1}$ be a domain and suppose for each point $x \in \Omega,$ a light ray with direction $x$ emanates from a punctual source at the origin $O,$ with intensity density function $f(x),$ where $f>0 \, a.e.$ on $\Omega$  and $f \in L^1(\Omega)$ . Suppose $D \subset \R^n$, the target we want to illuminate, is a domain contained in an $n-1$ dimensional hyper-surface,  with $\overline D$  compact and $O \notin \overline D.$ Let $\mu$ be a Radon measure on $\overline D$ satisfying the mass balance condition 
\[
\mu (\overline D) = \int_{\Omega} f(x) \, dx.
\]
Given two homogeneous and isotropic media $I$ and $II$ with refractive indices $n_1,n_2$, respectively, so that the point source at $O$ is surrounded by medium $I$ and the target domain $D$ is surrounded by medium $II$,
the {\emph {near field refractor problem}  is to find an interface $\mathcal S$ between media $I$ and $II$ parametrized by $\mathcal S = \{\rho(x) x : x \in \overline \Omega\}$ so that each ray with direction $x\in \Omega$ is refracted into $D$ according to Snell law and so that the energy conservation condition
\[
\int_{\mathcal T_{\mathcal S} (F)} f(x) \, dx = \mu(F)
\]
holds for all $F\subset D$, where $\mathcal T_{\mathcal S} (F)$ represents the directions $x\in \Omega$ that are refracted into $F$, see Definition \ref{def:definition of near field refractor}.
Existence of solutions to this problem is obtained in \cite{GH14}.

The purpose of this paper is to present an iterative scheme to find approximate solutions for this problem with arbitrary precision when $\mu$ is discrete measure and prove that the scheme gives the desired result in finite number of iterations.
The physical problem is three dimensional, but we carry out the analysis in $n$ dimensions.

A similar iterative scheme was developed in \cite{DGM17} to solve the far field refractor problem, extended in \cite{AG17} to deal with generated Jacobian equations, and in particular, used in \cite{K14} for mass transport problems with cost functions satisfying the MTW condition given in \cite{MTW05}.
The algorithm originates in pioneering works by Caffarelli, Kochengin and Oliker \cite{CKO99} for reflectors and by Bertsekas \cite{B79} for the assignment problem.
However, a major advance and simplification to solve these kind of problems numerically is introduced in  \cite{DGM17}  and  \cite{AG17} by showing that an appropriate mapping satisfies a Lipschitz condition. 
This essential step guarantees that the algorithm converges in a finite number of iterations, and
we stress that this does not require smoothness of the density function $f$.

The difficulty in extending these ideas when dealing with the near field refractor problem is that it has a complicated geometrical structure given by Descartes ovals requiring non trivial analytical estimates for the derivatives of these ovals, and it does not have a mass transport structure. 
Moreover, we present an abstract form of the algorithm having independent interest that might be useful to solve other problems with similar features.

The plan of the paper is as follows. Section \ref{sec:set up and definitions} contains preliminary results, the set up, and definitions needed in the rest of the paper.
In Section \ref{sec:estimates of derivatives of ovals} we present estimates of the derivatives of  Descartes ovals and lower gradient estimates under structural conditions on the discretization of the target $D$, Proposition \ref{prop:structural condition for lower estimates of gradients}. 
We will use these estimates in Section \ref{sec:Lipchitz estimate of the refractor map} to prove a one sided Lipschitz estimate of the refractor measure, Theorem \ref{thm:one sided Lipschitz estimate}. 
A Lipschitz estimate is a crucial  ingredient to prove that the abstract algorithm introduced in Section \ref{sec:abstract algorithm}, terminates in finitely many steps as shown in Section \ref{subsec:algorithm converges in a finite number of steps}; in particular, when applied to the near field refractor problem. In Section \ref{subsec:geometric assumptions on Omega and D}, we introduce a class of admissible vectors that will be used to apply the abstract algorithm to the near field refractor.   
Finally, in Section \ref{sec:final solution of the refractor problem} we show the application of the algorithm to solve the near field refractor problem.

\section{Set up and Definitions}\label{sec:set up and definitions}

In this section, we shall recall Snell's law of refraction, discuss some properties of the building blocks from which we construct near field refractors, and 
state geometric conditions between the set of incident directions and target to guarantee existence of solutions. In addition, we give the precise statement of the problem solved in the paper. 
\subsection{Snell's Law of refraction}
If from a point source of light located at the origin and surrounded by media $I$, with refractive index $n_1$, a ray of light emanates with unit direction $x$ and strikes an interface $\mathcal S$ between medium I and medium II  at a point $P$, then this light ray is refracted into the unit direction $m$ in medium $II$, with refractive index $n_2$, according to Snell's law given in vector form as 
\begin{equation}\label{vectorformofsnellslawwithlambda}
x - \kappa m = \lambda \nu
\end{equation}
with $\kappa=n_2/n_1$ and $\lambda= x \cdot \nu-\kappa \sqrt{1-\kappa^{-2}(1-(x \cdot \nu)^{2})}$, where $\nu$ is the unit normal at $P$ pointing towards medium $II$.
From this the standard Snell's law follows: $n_1\sin \theta_i=n_2\sin \theta_r$, with $\theta_i$ the angle of incidence between $x$ and $\nu$, and $\theta_r$ the angle of refraction between $m$ and $\nu$.
When $\kappa<1$, depending on the angle of incidence total internal reflection may occur, i.e., incident light may be totally reflected back into medium $I$ and not transmitted to medium II. If $x\cdot \nu\geq \sqrt{1-\kappa^2}$ or equivalently, $x\cdot m\geq \kappa$, then there is no total internal reflection;
see \cite[Sect. 2.1]{GH09}.

We will assume throughout the paper that $\kappa<1$. The analysis for $\kappa >1$ is similar.

\subsection{Descartes Ovals}
The treatment of the near field refractor problem requires the use of Descartes ovals, which have a special refraction property.
For $P \in \R^n$ and $\kappa |P| < b < |P|,$ a refracting  Descartes oval is the surface
\[
\v_b(P)  = \{h(x,P,b)x : x \in S^{n-1}, \, x\cdot P \geq b \}
\]
where
\begin{equation}\label{eq:definition of h(x,P,b)}
h(x,P,b) = \dfrac{(b-\kappa^2 x \cdot P)- \sqrt{(b-\kappa^2 x \cdot P)^2 - (1-\kappa^2) (b^2 - \kappa^2 |P|^2)}}{1 - \kappa^2}.
\end{equation}
If the region inside the oval $\{h(x,P,b)x : x \in S^{n-1} \}$ is made of a material with refractive index $n_1$ and the outside made of material with refractive index $n_2$, 
then using Snell's law it can be shown that each light ray emanating from the origin $O$ and having direction $x\in S^{n-1}$ with $x\cdot P\geq b$ is refracted by the oval $\v_b(P)$ into the point $P.$  See \cite[Sect. 4]{GH14} for detailed discussion.
\begin{figure}[htp]
  \centering
    \includegraphics[width=1\textwidth]{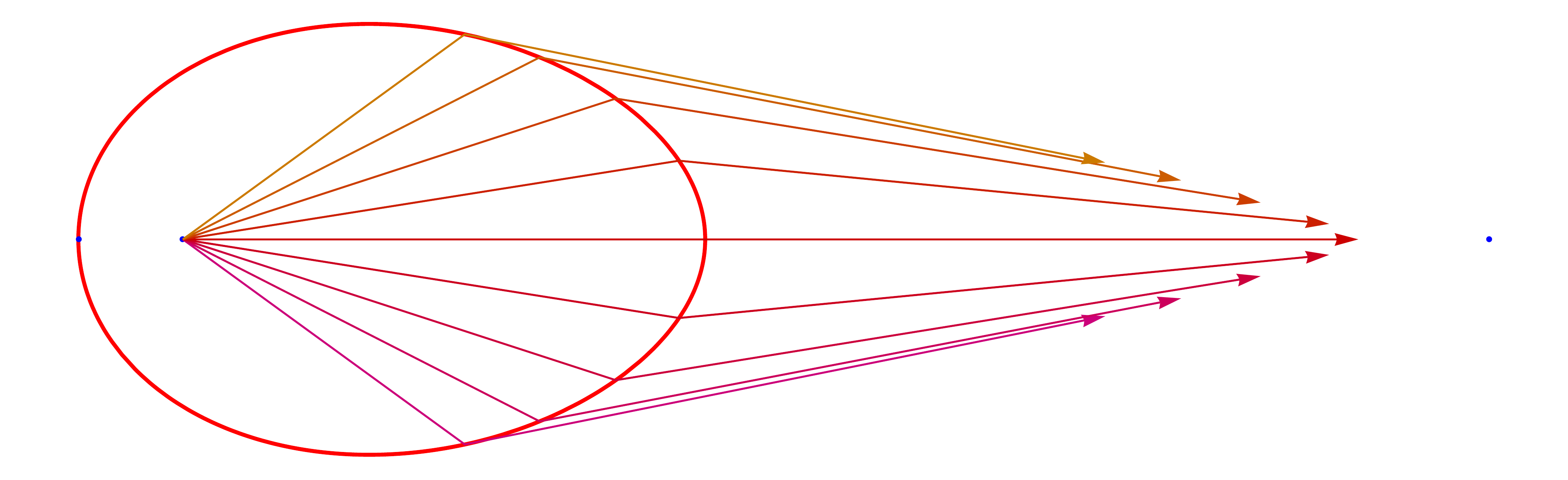}
    \caption{Refracting oval} 
    \label{pic:refracting oval}   
\end{figure}

 \subsection{Statement of the Problem}\label{subsec:statement of the problem} As in  \cite{GH14} we will impose the following two geometric configuration conditions on $\Omega$ and $D$ to formulate the main problem. The first condition is to avoid total internal reflection and the second is to guarantee that the target doesn't block itself:
\begin{enumerate}[label=\textbf{H.\arabic*}]
\item there exists $\tau$, with $0<\tau<1-\kappa$, such that $x\cdot P\geq (\kappa+\tau)|P|$ for all $x\in \Omega$ and $P\in D$;\label{eq:hypotheses 1}
\item let $0<r_0<\dfrac{\tau}{1+\kappa}\,\dist(O,D)$ and $Q_{r_0}=\{t\,x:x\in \Omega, 0\leq t<r_0\}$. Then given $X\in Q_{r_0}$ each ray emanating from $X$ intersects $D$ in at most one point. \label{eq:hypotheses 2} 
\end{enumerate}
Note that if $r_0$ satisfies \ref{eq:hypotheses 2}, then $r_0<\dfrac{1-\kappa}{1+\kappa}|P|$ for $P\in D$.
We shall prove that if   
\begin{equation}\label{eq:lower and upper estimate of b to avoid internal reflection}
\kappa|P|<b\leq (1+\kappa) \,r_0+\kappa |P|
\end{equation}
then
the oval $\v_b(P)$ refracts all directions $x\in \Omega$ into $P$. For this, we only need to verify that there is no internal reflection, that is, $x\cdot P\geq b$ for all $x\in \Omega$.
Indeed, 
\begin{align}\label{eq:b less than or equal to x.P}
b&\leq (1+\kappa) \,r_0+\kappa |P|
<(1+\kappa)\,\dfrac{\tau}{1+\kappa}\,\dist(0,D)+\kappa |P|\quad \text{from \ref{eq:hypotheses 2}}\notag\\
&\leq
(\tau+\kappa)\,|P|\leq x\cdot P \quad \text{from \ref{eq:hypotheses 1}.}
\end{align}

Near field refractors are then defined as follows. 
\begin{definition}\label{def:of refractor}
A surface $\mathcal S = \{x\rho(x): x \in \m\} \subset Q_{r_o}$ is  said to be a near field  refractor if for any point $y\rho(y) \in \mathcal S$ there exist point $P \in \d$ and $\kappa |P|<b<|P|$ such that the refracting oval $\mathcal \v_b(P)$ supports $\mathcal S$ at $y \rho(y)$, i.e. $\rho(x) \leq h(x,P,b)$ for all $x \in \m$ with equality at $x = y.$
\end{definition}
We remark that if \eqref{eq:hypotheses 1} and \eqref{eq:hypotheses 2} hold, then each near field refractor is Lipschitz \cite[Lemma 5.3]{GH14}.

The refractor map is as follows.
\begin{definition}\label{def:definition of near field refractor}
Given a near field refractor $\mathcal S,$ the near field refractor mapping of $\mathcal S$ is the multi-valued map defined for $x \in \m$ by
\[
\mathcal R_{\mathcal S}(x) = \{P \in \d: \textrm { there exists a supporting oval} \,  \v_b(P) \, \textrm{to} \, \mathcal S  \, \textrm{at} \,  \rho(x)x\}.
\]
Given $P \in \d$ the near field tracing mapping of $\S$ is defined by
\[
\mathcal T_{\S}(P) = \mathcal R^{-1}_{\S}(P) = \{x \in \m: P \in \mathcal R_{\S} (x)\}.
\]
\end{definition}
Suppose that we are given a nonnegative $f \in L^1(\m)$, $f(x)$ represents the intensity of the light ray emanating from $O$ with direction $x$.  We recall the definition of near field refractor measure, see \cite[Formula (5.5)]{GH14}. 
\begin{definition}\label{def:definition of refractor measure}
The near field refractor measure associated with the near field refractor $\S$ and the function $f \in L^1(\m)$ is the finite Borel measure given by  
\[
\mathcal M_{\S,f} (F) = \int_{\mathcal T_{\S}(F)} f \, dx
\]
for every Borel set $F\subset \d.$
\end{definition}

Given a Radon measure $\mu$ defined on $D$ and the energy conservation condition $\int_{\Omega} f\,dx = \mu(D),$ {\it the near field refractor problem} is to find a near field refractor $\S$ such that 
\[
\mathcal M_{\S,f} (F) = \mu(F)
\]
for any Borel set $F \subset \d.$
Under conditions \ref{eq:hypotheses 1} and \ref{eq:hypotheses 2} above, the existence of such a surface $\S$ is proved in \cite{GH14}.
In particular, given distinct points $P_1, \cdots, P_N,$  in $\d,$ positive numbers $g_1, \cdots, g_N,$ a nonnegative function $f \in L^1(\Omega)$ such that 
\begin{equation}\label{energy conservation condition}
\int_{\m} f(x)dx = \sum_{i=1}^N g_i 
\end{equation}
and $\kappa |P_1| < b_1< \kappa |P_1| + r_0 \dfrac{(1-\kappa)^2}{1+\kappa},$ ($r_0$ in Hypothesis \ref{eq:hypotheses 2} above) there exists a unique vector $\b = (b_1, b_2, \cdots, b_N) \in \prod_{i=1}^N\( \kappa |P_i|,|P_i|\)$ such that the poly-oval 
\begin{equation}\label{eq:definition of poly-oval}
\S(\b) = \{\rho(x)x : x \in \m \, \textrm{and} \,\rho(x) = \min_{1 \leq i \leq N} h(x, P_i, b_i)\}
\end{equation}
is a near field refractor satisfying
\[
\mathcal M_{\S(\b), f} ({P_i}) = g_i;
\]
see \cite[Thm. 5.7]{GH14}.

{\it The main purpose of this paper is} to discuss an iterative scheme to construct this refractor with arbitrary precision and to show that the scheme converges in a finite number of steps.  That is, given $f; g_1,\cdots ,g_N;P_1,\cdots ,P_N, b_1$ satisfying $\kappa |P_1| < b_1 \leq \kappa |P_1|+r_0 \dfrac{(1-\kappa)^2}{1+\kappa}$ and  $\epsilon > 0$ we seek a vector $\b = (b_1, b_2, \cdots, b_N) \in \prod_{i=1}^N\( \kappa |P_i|,|P_i|\)$, which depends on $\epsilon$, such that the poly-oval refractor $\S(\b)$ satisfies 
\begin{equation}\label{conditionfornumericalsolution}
|\mathcal M_{\S(\b),f}(P_i) - g_i| \leq \epsilon,\qquad 1\leq i\leq N.
\end{equation}

\subsection{Properties of the refractor mapping}
In this subsection we will prove some results that will be needed to apply the algorithm from Section \ref{sec:abstract algorithm} to solve the main problem. 
In the proof of these results, and in the subsequent sections, the following part of   \cite[Lemma 4.1]{GH14}  will be used frequently.

\begin{lemma}\label{lm:minandmaxofovalskappa<1}
Let $0<\kappa <1$, $h(x,P,b)$ given by \eqref{eq:definition of h(x,P,b)}, and assume that $\kappa |P|<b<|P|$.
Then 
\begin{equation}\label{eq:lowerestimateofrho}
\min_{x\in S^{n-1}}h(x,P,b)
=\dfrac{b-\kappa |P|}{1+\kappa},
\qquad 
\text{and}\qquad \max_{x\in S^{n-1}}h(x,P,b) =\dfrac{b-\kappa |P|}{1-\kappa}.
\end{equation}
\end{lemma}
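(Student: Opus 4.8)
The plan is to compute the extrema of $h(x,P,b)$ as a function of $x\in S^{n-1}$ by observing that $h$ depends on $x$ only through the scalar $t := x\cdot P$. Write $h(x,P,b) = \phi(t)$ where, with $a := b - \kappa^2 t$ and $c := b^2 - \kappa^2|P|^2$ (note $c>0$ since $b>\kappa|P|>0$),
\[
\phi(t) = \frac{a - \sqrt{a^2 - (1-\kappa^2)c}}{1-\kappa^2}.
\]
As $x$ ranges over $S^{n-1}$, the value $t = x\cdot P$ ranges over the interval $[-|P|,|P|]$, so it suffices to determine whether $\phi$ is monotone on that interval and then evaluate $\phi$ at the endpoints $t = \pm|P|$.

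First I would check that $\phi$ is well defined and real on $[-|P|,|P|]$: one needs $a^2 \ge (1-\kappa^2)c$, i.e. $(b-\kappa^2 t)^2 \ge (1-\kappa^2)(b^2-\kappa^2|P|^2)$; expanding, this reduces to $(\kappa^2 t - \kappa^2|P|)(\kappa^2 t + \kappa^2 |P| - 2b) \le 0$ together with the sign bookkeeping, which holds on the relevant range because $b>\kappa|P|$ forces $b > \kappa^2|P| \ge \kappa^2 t$ so $a>0$, and a short computation shows the radicand is a downward- or upward-opening quadratic in $t$ that stays nonnegative on $[-|P|,|P|]$ — in fact it factors nicely at $t = \pm|P|$. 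Next, differentiate: since $a' = -\kappa^2$,
\[
\phi'(t) = \frac{1}{1-\kappa^2}\left(-\kappa^2 + \frac{\kappa^2 a}{\sqrt{a^2-(1-\kappa^2)c}}\right) = \frac{\kappa^2}{1-\kappa^2}\cdot\frac{a - \sqrt{a^2-(1-\kappa^2)c}}{\sqrt{a^2-(1-\kappa^2)c}}.
\]
Because $c>0$ and $a>0$ on the range, the numerator $a - \sqrt{a^2-(1-\kappa^2)c} > 0$, hence $\phi'(t) > 0$ throughout: $\phi$ is strictly increasing in $t$. Therefore $\min_{x} h = \phi(-|P|)$ and $\max_x h = \phi(|P|)$.

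Finally I would evaluate the endpoints. At $t = |P|$: $a = b - \kappa^2|P|$ and $a^2 - (1-\kappa^2)c = (b-\kappa^2|P|)^2 - (1-\kappa^2)(b^2-\kappa^2|P|^2) = (b-\kappa^2|P|)^2 - (1-\kappa^2)(b-\kappa|P|)(b+\kappa|P|)$; expanding and collecting terms yields a perfect square, $(b - |P|)^2\kappa^2 / \kappa^2$ — more precisely the radicand equals $\kappa^2(|P|-b)^2$ after simplification, so $\sqrt{\;\cdot\;} = \kappa(|P|-b)$ (positive since $b<|P|$), giving $\phi(|P|) = \dfrac{(b-\kappa^2|P|) - \kappa(|P|-b)}{1-\kappa^2} = \dfrac{b(1+\kappa) - \kappa|P|(1+\kappa)}{1-\kappa^2} = \dfrac{b - \kappa|P|}{1-\kappa}$. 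At $t = -|P|$: $a = b + \kappa^2|P|$ and the radicand becomes $\kappa^2(|P|+b)^2$, so $\sqrt{\;\cdot\;} = \kappa(|P|+b)$, giving $\phi(-|P|) = \dfrac{(b+\kappa^2|P|) - \kappa(|P|+b)}{1-\kappa^2} = \dfrac{b - \kappa|P|}{1+\kappa}$. These are exactly the claimed values.

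The main obstacle is purely computational: the algebraic simplification showing that the discriminant $a^2 - (1-\kappa^2)(b^2-\kappa^2|P|^2)$ becomes a perfect square $\kappa^2(|P|\mp b)^2$ at the endpoints $t=\pm|P|$, and verifying the correct sign of the square root in each case (which uses $\kappa|P| < b < |P|$). Everything else — the reduction to a one-variable function and the monotonicity via the sign of $\phi'$ — is immediate once the substitution $t = x\cdot P$ is made.
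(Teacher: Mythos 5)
Your proof is correct in substance, and it is worth noting that the paper itself gives no proof of this lemma: it is quoted verbatim from \cite[Lemma 4.1]{GH14}, so your computation serves as a self-contained verification rather than a variant of an in-paper argument. The reduction to the one-variable function $\phi(t)$ with $t=x\cdot P\in[-|P|,|P|]$, the monotonicity $\phi'(t)>0$ (using $a=b-\kappa^2t>0$ and $c=b^2-\kappa^2|P|^2>0$, so that $a-\sqrt{a^2-(1-\kappa^2)c}>0$), and the endpoint evaluations at $t=\pm|P|$ giving $\dfrac{b-\kappa|P|}{1\pm\kappa}$ are all right, including the sign choices of the square roots, which correctly use $\kappa|P|<b<|P|$. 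The one soft spot is your justification that the radicand is nonnegative on $[-|P|,|P|]$: the stated reduction to $(\kappa^2t-\kappa^2|P|)(\kappa^2t+\kappa^2|P|-2b)\le 0$ ``with sign bookkeeping'' is not an equivalence (the radicand equals that product plus $\kappa^2(b-|P|)^2$, so the product being nonpositive does not by itself give nonnegativity), and as written this step is garbled. The repair is immediate and is in fact already recorded in the paper: by \eqref{eq:definition of Deltat} the radicand equals $\kappa^{2}\bigl((b-t)^{2}+(1-\kappa^{2})(|P|^{2}-t^{2})\bigr)$, which is strictly positive for all $|t|\le|P|$ (at $t=\pm|P|$ it is $\kappa^{2}(b\mp|P|)^{2}>0$); this also guarantees that $\phi$ is differentiable on the whole closed interval, which your monotonicity argument implicitly needs. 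With that substitution your proof is complete.
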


We begin with the following monotonicity property.
\begin{lemma}\label{monotonicityoftrace}
Let $\b = (b_1, \ldots, b_N)$ and $\b^* = (b^*_1, \ldots, b_N^*)$ be in $\prod_{i=1}^N\( \kappa |P_i|,|P_i|\)$ 
Suppose that for some $l$, $b^*_l \leq b_l$ and for all $i \neq l,$ $b_i^* = b_i$, where $1 \leq l, i\leq N.$ Then
\begin{equation}\label{eq:inclusionwithell}
\t_{\S(\b)}(P_l) \subseteq \t_{\S(\b^*)}(P_l)
\end{equation}
and 
\begin{equation}\label{eq:inclusionwithinotell}
\t_{\S(\b^*)}(P_i) \subseteq \t_{\S(\b)}(P_i) \  \textrm{for} \ i \neq l,
\end{equation}
where the inclusions are up to a set of measure zero.
Consequently
\[
\mathcal M_{\S(\b),f}(P_l) \leq \mathcal M_{\S(\b^*),f}(P_l) \ \textrm{and} \ \mathcal M_{\S(\b),f}(P_i) \geq \mathcal M_{\S(\b^*),f}(P_i) \  \textrm{for} \ i \neq l.
\]
\end{lemma}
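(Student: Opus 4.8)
The proof rests on two facts: that $h(x,P,\cdot)$ is smooth and strictly increasing in $b$, and that a poly‑oval is touched from above by two ovals with distinct foci only on a null set of directions. For $\b\in\prod_{i}(\kappa|P_i|,|P_i|)$ write $\rho_\b(x)=\min_{1\le i\le N}h(x,P_i,b_i)$, the radial function of $\S(\b)$, and let $\Omega_j(\b)=\{x\in\m:\ h(x,P_j,b_j)\le h(x,P_i,b_i)\text{ for all }i\}$ be the cell of $P_j$, so $\bigcup_{j}\Omega_j(\b)=\m$. Expanding the radicand in \eqref{eq:definition of h(x,P,b)} gives $\kappa^{2}\bigl[(b-x\cdot P)^{2}+(1-\kappa^{2})(|P|^{2}-(x\cdot P)^{2})\bigr]$, which is strictly positive when $\kappa|P|<b<|P|$, so $h(x,P,\cdot)$ is $C^{\infty}$; differentiating in $b$ yields $\partial_b h\in\bigl[\tfrac1{1+\kappa},\tfrac1{1-\kappa}\bigr]$, so $h(x,P,\cdot)$ is strictly increasing. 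Since $b_l^{*}\le b_l$ and $b_i^{*}=b_i$ for $i\ne l$, it follows that $h(\cdot,P_l,b_l^{*})\le h(\cdot,P_l,b_l)$ with the other ovals unchanged, hence $\rho_{\b^{*}}\le\rho_{\b}$ on $\m$ and
\[
\Omega_l(\b)\subseteq\Omega_l(\b^{*}),\qquad \Omega_i(\b^{*})\subseteq\Omega_i(\b)\quad (i\ne l).
\]
Moreover, if $x\in\Omega_j(\b)$ then $\rho_\b(x)=h(x,P_j,b_j)$ while $\rho_\b\le h(\cdot,P_j,b_j)$ everywhere (it is the minimum), so $\v_{b_j}(P_j)$ supports $\S(\b)$ at $\rho_\b(x)x$; thus $\Omega_j(\b)\subseteq\t_{\S(\b)}(P_j)$, and similarly for $\b^{*}$.

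Next I claim $\t_{\S(\b)}(P_j)=\Omega_j(\b)$ up to a null set (likewise for $\b^{*}$); the inclusion $\supseteq$ was just shown. For the converse, suppose $x\in\t_{\S(\b)}(P_j)\setminus\Omega_j(\b)$. Then $\rho_\b(x)<h(x,P_j,b_j)$, and choosing $k$ with $\rho_\b(x)=h(x,P_k,b_k)$ we have $P_k\ne P_j$ (the $P_i$ are distinct); both $\v_{b_k}(P_k)$ (by minimality) and an oval $\v_b(P_j)$ witnessing $x\in\t_{\S(\b)}(P_j)$ support $\S(\b)$ at $X:=\rho_\b(x)x$. Being a finite minimum of smooth functions, $\rho_\b$ is Lipschitz on $\m$ (also \cite[Lemma 5.3]{GH14}), hence differentiable a.e.; discarding $\partial\Omega$ and the non‑differentiability set (both null), at the remaining $x$ the surface $\S(\b)$ has a tangent hyperplane at $X$, and every supporting oval — smooth and touching from above, so the gradient of the difference vanishes at $x$ — shares it. By Snell's law \eqref{vectorformofsnellslawwithlambda} the refracted ray at $X$ is then determined (since $\S(\b)$ is a genuine near field refractor there is no total internal reflection, so this refraction is honest), so the foci of all supporting ovals at $X$ lie on that single ray; with two distinct foci $P_j,P_k$ this forces $X$ onto the line $\ell_{jk}$ through them. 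But $\{x\in\m:\rho_\b(x)x\in\ell_{jk}\}$ is finite, hence null — the point $\rho_\b(x)x$ then lies in $\S(\b)\cap\ell_{jk}$, and $\S(\b)$, a finite union of Descartes ovals, meets $\ell_{jk}$ in finitely many points — and there are finitely many pairs $(j,k)$; thus $\t_{\S(\b)}(P_j)\setminus\Omega_j(\b)$ is null.

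Combining the two claims, up to null sets
\[
\t_{\S(\b)}(P_l)\subseteq\Omega_l(\b)\subseteq\Omega_l(\b^{*})\subseteq\t_{\S(\b^{*})}(P_l),\qquad \t_{\S(\b^{*})}(P_i)\subseteq\Omega_i(\b^{*})\subseteq\Omega_i(\b)\subseteq\t_{\S(\b)}(P_i)\ \ (i\ne l),
\]
which are \eqref{eq:inclusionwithell} and \eqref{eq:inclusionwithinotell}; in fact \eqref{eq:inclusionwithell} holds with no exceptional set, since for $x\in\t_{\S(\b)}(P_l)$ with witnessing oval $\v_b(P_l)$ one checks directly that either $\v_{b_l^{*}}(P_l)$ or $\v_b(P_l)$ supports $\S(\b^{*})$ at $\rho_{\b^{*}}(x)x$ — according as the minimum defining $\rho_{\b^{*}}(x)$ is or is not attained at $l$, using $\rho_{\b^{*}}\le\rho_\b$ and, in the second case, $\rho_{\b^{*}}(x)=\rho_\b(x)=h(x,P_l,b)$. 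Finally, since $f\ge0$ and null sets do not affect $\int f\,dx$, integrating over the nested sets gives $\mathcal M_{\S(\b),f}(P_l)\le\mathcal M_{\S(\b^{*}),f}(P_l)$ and $\mathcal M_{\S(\b),f}(P_i)\ge\mathcal M_{\S(\b^{*}),f}(P_i)$ for $i\ne l$.

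The monotonicity of the cells and the final integration are routine; the heart of the matter is the claim $\t_{\S(\b)}(P_j)=\Omega_j(\b)$ modulo null sets, i.e.\ that the poly‑oval is touched from above by two ovals with distinct foci only on a Lebesgue‑null set of directions. This combines the (automatic) Lipschitz regularity of $\rho_\b$ with the rigidity of Descartes refraction — a prescribed tangent plane fixes the refracted ray and hence confines any two distinct foci to one line, which only a null set of incident directions can see. The only other point requiring care is that all ovals appearing here are genuinely refracting (no total internal reflection), which is part of the standing setup for near field refractors.
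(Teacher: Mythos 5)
Your proof is correct, but it follows a genuinely different route from the paper. The paper does not re-derive the identification of the tracing sets with the ``cells'': it cites \cite[Proof of Lemma 5.4]{GH14} for the key fact that at every nonsingular point $x_0\in\t_{\S(\b)}(P_l)$ the supporting oval is precisely $\v_{b_l}(P_l)$, then gets \eqref{eq:inclusionwithinotell} from the two-line sandwich $h(x_0,P_i,b_i)=\rho^*(x_0)\le\rho(x_0)\le h(x_0,P_i,b_i)$, and \eqref{eq:inclusionwithell} by contradiction (if the supporting oval for $\S(\b^*)$ at $x_0$ had focus $P_j$, $j\ne l$, then $x_0$ would be singular for $\S(\b)$). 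You instead prove from scratch that $\t_{\S(\b)}(P_j)$ coincides, up to a null set, with the cell $\{x:\rho_\b(x)=h(x,P_j,b_j)\}$ — via a.e.\ differentiability, coincidence of tangent planes of supporting ovals, Snell rigidity forcing two distinct foci onto a single refracted ray, and finiteness of the directions sent to a fixed line — after which both inclusions fall out of the trivial monotonicity of the cells in $b$. This buys self-containedness (you reprove the ingredient the paper imports from \cite{GH14}), and your direct case analysis for \eqref{eq:inclusionwithell} even gives that inclusion exactly, with no exceptional set, which is cleaner than the paper's a.e.\ statement. Two points deserve a word. First, the assertion that $\S(\b)$, ``a finite union of Descartes ovals, meets $\ell_{jk}$ in finitely many points'' needs justification: a quartic surface can in principle contain a line, so the right argument is that each oval lies on the level set $\{|X|+\kappa|X-P_i|=b_i\}$ of the convex function $|X|+\kappa|X-P_i|$, whose restriction to any line is convex and is never constant on a segment (affinity of both terms forces the segment onto the line $OP_i$, where the slopes are $\pm1\pm\kappa\neq0$), so a line meets each oval in at most two points; alternatively you can bypass the null-line argument altogether by invoking the visibility hypothesis \ref{eq:hypotheses 2}: two distinct targets $P_j\ne P_k$ on one refracted ray emanating from a point of $Q_{r_0}$ is impossible, which is essentially how \cite{GH14} concludes. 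Second, using the refraction property of the min-attaining oval $\v_{b_k}(P_k)$ at $x$ tacitly requires $x\cdot P_k\ge b_k$; this is guaranteed in the regime where the lemma is applied, by \eqref{eq:lower and upper estimate of b to avoid internal reflection} and \eqref{eq:b less than or equal to x.P}, but not literally by $b_k\in(\kappa|P_k|,|P_k|)$ alone — the paper's citation carries the same standing assumptions, so this is a shared caveat rather than a gap in your argument.
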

\begin{proof}
From its definition, $\S(\b)$ is differentiable a.e., so the set of singular points has surface measure zero.
We use here that if $x_0\in \t_{\S(\b)}(P_l)$ is not a singular point, then the oval $\mathcal O_{b_l}(P_l)$ supports $\S(\b)$ at $x_0$; this holds for any near field refractor $\S(\b)$ and any $1\leq l\leq N$; see \cite[Proof of Lemma 5.4]{GH14}.

We first prove \eqref{eq:inclusionwithinotell} when $x_0$ is not a singular point of $\S(\b^*)$. Since $b_l^*\leq b_l$, we obviously have $\rho^*(x)\leq \rho(x)$ for all $x\in \Omega$, where $\rho^*$ is the parametrization of $\S(\b^*)$ and $\rho$ is the parametrization of $\S(\b)$. 
Suppose $i\neq l$ and let 
$x_0\in \t_{\S(\b^*)}(P_i)$. Then, since $x_0$ is not a singular point of $\S(\b^*)$, the oval with polar radius $h(x,P_i,b_i)$ supports $\S(\b^*)$ at $x_0$. We have $\rho(x)\leq h(x,P_i,b_i)$. Therefore 
\[
h(x_0,P_i,b_i)=\rho^*(x_0)\leq \rho(x_0)\leq h(x_0,P_i,b_i),
\]
that is, $x_0\in \t_{\S(\b)}(P_i)$. 
\newline
We now prove \eqref{eq:inclusionwithell}. That is, we prove that if $x_0$ is neither a singular point of $\S(\b)$ nor a singular point of $\S(\b^*)$, and $x_0\in \t_{\S(\b)}(P_l)$, then $x_0\in \t_{\S(\b^*)}(P_l)$. We may assume $b_l^*<b_l$.
We have that the oval $\mathcal O_{b_l}(P_l)$ supports $\S(\b)$ at $x_0$. We claim that the oval with polar radius $h\(x,P_l,b_l^*\)$ supports $\S(\b^*)$ at $x_0$. 
Suppose this is not true. Since by definition $\rho^*(x)\leq h\(x,P_l,b_l^*\)$, we would have $\rho^*(x_0)< h\(x_0,P_l,b_l^*\)$.
So $\rho^*(x_0)=h\(x_0,P_j,b_j\)$  for some $j\neq l$, and therefore $h\(x,P_j,b_j\)$  supports $\S(\b^*)$ at $x_0$.
Since $x_0$ is not a singular point of $\S(\b^*)$, then by the inclusion previously proved we get that $x_0\in \t_{\S(\b)}(P_j)$. Since $j\neq l$ we obtain that $x_0$ is a singular point of $\S(\b)$, a contradiction.
\end{proof}

\begin{lemma}\label{forlarge_b_energyis0}
Let $\b = (b_1, \ldots, b_N) \in \prod_{i=1}^N\( \kappa |P_i|,|P_i|\)$. Consider the family of refractors obtained from $\S(\b)= \{ \rho(x)x : x \in \Omega\},$ by changing only $b_i$ and fixing $b_j$ for all $j \neq i.$ Then
$\mathcal M_{\S(\b),f}(P_i)\to \int_\Omega f(x)\,dx$ as $b_i\to \(\kappa |P_i|\)^+$.
\end{lemma}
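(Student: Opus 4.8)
The plan is to show that when $b_i$ is sufficiently close to $\kappa|P_i|$ the oval $\v_{b_i}(P_i)$ collapses toward the origin and therefore lies strictly below all the other ovals $\v_{b_j}(P_j)$, $j\neq i$. Consequently $\rho\equiv h(\cdot,P_i,b_i)$ on $\m$, every incident direction is refracted into $P_i$, and $\mathcal M_{\S(\b),f}(P_i)=\int_\Omega f\,dx$ \emph{exactly} for all such $b_i$; this is slightly stronger than the asserted limit.

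Concretely I would argue as follows. By the max formula in Lemma \ref{lm:minandmaxofovalskappa<1}, $0<h(x,P_i,b_i)\leq \dfrac{b_i-\kappa|P_i|}{1-\kappa}$ for every $x\in S^{n-1}$, so $\sup_{S^{n-1}}h(\cdot,P_i,b_i)\to 0$ as $b_i\to(\kappa|P_i|)^+$. On the other hand, for each fixed $j\neq i$ the min formula in the same lemma gives $h(x,P_j,b_j)\geq c_j:=\dfrac{b_j-\kappa|P_j|}{1+\kappa}>0$ for all $x$, a positive constant independent of $b_i$; put $c=\min_{j\neq i}c_j>0$. Now choose $b_i$ close enough to $\kappa|P_i|$ that both $\dfrac{b_i-\kappa|P_i|}{1-\kappa}<c$ and $b_i\leq (1+\kappa)\,r_0+\kappa|P_i|$ hold (the latter costs nothing since $r_0>0$). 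Then for every $x\in\m$ and every $j\neq i$,
\[
h(x,P_i,b_i)\leq \dfrac{b_i-\kappa|P_i|}{1-\kappa}<c\leq h(x,P_j,b_j),
\]
hence $\rho(x)=\min_{1\leq j\leq N}h(x,P_j,b_j)=h(x,P_i,b_i)$ on all of $\m$.

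It then remains to read off the refractor measure. Since $\rho(x)\le h(x,P_i,b_i)$ with equality for every $x\in\m$, the oval $\v_{b_i}(P_i)$ supports $\S(\b)$ at each point $\rho(x)x$; moreover $b_i\leq (1+\kappa)\,r_0+\kappa|P_i|$ together with the computation \eqref{eq:b less than or equal to x.P} gives $x\cdot P_i\geq b_i$ for all $x\in\Omega$, so there is no total internal reflection and $\v_{b_i}(P_i)$ genuinely refracts every direction $x\in\Omega$ into $P_i$. Thus $P_i\in\mathcal R_{\S(\b)}(x)$ for all $x\in\m$, i.e. $\t_{\S(\b)}(P_i)=\m$, and therefore $\mathcal M_{\S(\b),f}(P_i)=\int_{\t_{\S(\b)}(P_i)}f\,dx=\int_\Omega f\,dx$. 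Letting $b_i\to(\kappa|P_i|)^+$ gives the statement.

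The only point demanding any attention — and the nearest thing to an obstacle — is verifying that the shrinking oval does not create total internal reflection for directions in $\Omega$; this is precisely what the standing hypotheses \ref{eq:hypotheses 1}--\ref{eq:hypotheses 2} and the bound \eqref{eq:lower and upper estimate of b to avoid internal reflection} guarantee, and it is automatic here because $b_i\to\kappa|P_i|$ forces $b_i$ into the admissible range. Everything else is a one-line comparison of the max/min bounds supplied by Lemma \ref{lm:minandmaxofovalskappa<1}.
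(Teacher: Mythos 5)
Your proposal is correct and follows essentially the same route as the paper: compare the maximum of $h(\cdot,P_i,b_i)$, namely $\dfrac{b_i-\kappa|P_i|}{1-\kappa}$, with the minimum $\dfrac{b_j-\kappa|P_j|}{1+\kappa}$ of the other ovals from Lemma \ref{lm:minandmaxofovalskappa<1}, so that for $b_i$ close enough to $\kappa|P_i|$ one has $\rho\equiv h(\cdot,P_i,b_i)$, hence $\t_{\S(\b)}(P_i)=\Omega$ and $\mathcal M_{\S(\b),f}(P_i)=\int_\Omega f\,dx$ exactly. Your extra verification that no total internal reflection occurs (via \eqref{eq:b less than or equal to x.P}) is a harmless addition beyond what the paper writes.
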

\begin{proof}
We have $h(x,P_i,b_i)\leq \dfrac{b_i-\kappa |P_i|}{1-\kappa}$ and also
$h(x,P_j,b_j)\geq \dfrac{b_j-\kappa|P_j|}{1+\kappa}$  for all $x\in \Omega$, 
from Lemma \ref{lm:minandmaxofovalskappa<1}. Let $\delta=\min_{j\neq i}\dfrac{b_j-\kappa|P_j|}{1+\kappa}$. For $b_i$ with 
$\dfrac{b_i-\kappa |P_i|}{1-\kappa}<\delta$, we then get $h(x,P_i,b_i)<h(x,P_i,b_i)$ for all $x\in \Omega$ and $j\neq i$. So $\rho(x)=h(x,P_i,b_i)$ and so $\t_{\S(\b)}(P_i)=\Omega$ and (a) follows.

\end{proof}

\section{Estimates for derivatives of the polar radii of ovals}\label{sec:estimates of derivatives of ovals}
 In this section, we will obtain bounds for the derivatives of the polar radius $h(x, P, b).$ In particular, we will prove the lower gradient estimate \ref{eq:lower bound for mathcal Gij} which will be used in the next section to prove a Lipschitz property of the refractor measure.
From \eqref{eq:definition of h(x,P,b)} we have 
\[
h(x,P,b) = \dfrac{(b-\kappa^2 x \cdot P)- \sqrt{\Delta(x\cdot P,b,|P|)}}{1 - \kappa^2},
\]
where 
\[
\Delta(t,b,|P|):=\(b-\kappa^{2}\, t\)^{2}-\(1-\kappa^{2}\)\(b^{2}-\kappa^{2}|P|^{2}\).
\]
By calculation 
\begin{equation}\label{eq:definition of Deltat}
\Delta(t,b,|P|) = \kappa^{2}\((b- t)^{2}
+(1-\kappa^{2})\(|P|^{2}-t^{2}\)\).
\end{equation}
As a function of $t$, $\Delta$ is increasing in the interval $(b/\kappa^2,+\infty)$ and decreasing in the interval $(-\infty,b/\kappa^2)$.
Let $t=x\cdot P$ with $|x|\leq 1+\epsilon$, so $-(1+\epsilon)|P|\leq t\leq (1+\epsilon)|P|$, and we have
\[
\min_{|x|\leq 1+\epsilon}\Delta(x\cdot P,b,|P|)=
\min_{-(1+\epsilon)|P|\leq t\leq (1+\epsilon)|P|}\Delta(t,b,|P|).
\]
Let $\epsilon>0$ be such that $1+\epsilon<1/\kappa$. Since $b>\kappa\,|P|$, it follows that $\left[-(1+\epsilon)|P|, (1+\epsilon)|P|\right]
\subset \(-\infty,b/\kappa^2\)$. Therefore 
\[
\min_{-(1+\epsilon)|P|\leq t \leq (1+\epsilon)|P|}\Delta(t,b,|P|)=\Delta\((1+\epsilon)|P|,b,|P|\),
\]
and 
\[
\max_{-(1+\epsilon)|P|\leq t \leq (1+\epsilon)|P|}\Delta(t,b,|P|)=\Delta\(-(1+\epsilon)|P|,b,|P|\).
\]
We have 
\begin{align*}
\Delta\(-(1+\epsilon)|P|,b,|P|\)
&=
\kappa^2\,\(\((1+\epsilon)|P|+b\)^2+(1-\kappa^2)|P|^2\(1-(1+\epsilon)^2\)\)\\
&\leq
C(\kappa)\,|P|^2
\end{align*}
for $\kappa|P|<b<|P|$, so
\begin{equation}\label{eq:upper bound for Delta}
\Delta(x\cdot P,b,|P|)\leq C(\kappa)\,|P|^2,\qquad \text{for $|x|\leq 1+\epsilon$.}
\end{equation}
Let us estimate $\Delta\((1+\epsilon)|P|,b,|P|\)$ from below.
For this, we assume $b$ satisfies \eqref{eq:lower and upper estimate of b to avoid internal reflection} and recall assumptions \ref{eq:hypotheses 1} and \ref{eq:hypotheses 2}.

We write
\begin{align*}
\Delta((1+\epsilon) |P|,b,|P|)
&=\kappa^2\,\(\((1+\epsilon)|P|-b\)^2+(1-\kappa^2)|P|^2\(1-(1+\epsilon)^2\)\)\\
&=\kappa^2\,
\(
\(|P|-b\)^2
+
2\,\epsilon \,|P|\,(|P|-b)
+\epsilon\,\(\epsilon-(1-\kappa^2)\,\(2+\epsilon\)\)\,|P|^2\)\\
&\geq 
\kappa^2\,
\(
\(|P|-b\)^2
+\epsilon\,\(\epsilon-(1-\kappa^2)\,\(2+\epsilon\)\)\,|P|^2\).
\end{align*} 
We have 
\begin{align*}
|P|-b&>(1-\kappa)|P|-(1+\kappa)\,r_0\qquad \text{from \eqref{eq:lower and upper estimate of b to avoid internal reflection}}\\
&\geq \dfrac{1+\kappa}{\tau}(1-\kappa)r_0-(1+\kappa)r_0\qquad \text{from \ref{eq:hypotheses 2} since $P\in D$}\\
&=(1+\kappa)\(\dfrac{1-\kappa}{\tau}-1\)\,r_0:=\delta>0
\end{align*}
since $\tau<1-\kappa$ from \ref{eq:hypotheses 1}.
Since $D$ is a bounded set, $|P|\leq M$ for all $P\in D$, and since $\epsilon-(1-\kappa^2)\,\(2+\epsilon\)<0$ for $\epsilon>0$ small, we get 
\[
\epsilon\,\(\epsilon-(1-\kappa^2)\,\(2+\epsilon\)\)\,|P|^2
\geq 
\epsilon\,\(\epsilon-(1-\kappa^2)\,\(2+\epsilon\)\)\,M^2
\geq -\delta^2/2
\]
choosing $\epsilon>0$ small enough.

Therefore we obtain that there are structural constants $C_0>0$ and $\epsilon>0$ such that
\begin{equation}\label{eq:lower bound for quantity inside the square root}
\Delta(x\cdot P,b,|P|)\geq C_0
\end{equation}
for all $|x|\leq 1+\epsilon$, $P\in D$ and $b$ satisfying \eqref{eq:lower and upper estimate of b to avoid internal reflection},
consequently, the formula \eqref{eq:definition of h(x,P,b)} can be extended and is then well defined for all these values.
In particular, \eqref{eq:definition of h(x,P,b)} can be differentiated with respect to $x$ for all $|x|\leq 1+\epsilon$
obtaining
\begin{equation}\label{eq:gradient of h in x}
\nabla_xh(x,P,b)
=
\dfrac{\kappa^{2}h(x,P,b)}{\sqrt{\Delta \(x\cdot P,b,|P|\)}}\,P\qquad \text{for $|x|\leq 1+\epsilon$.}
\end {equation}

{\bf Upper bound for $\nabla_xh(x,P,b)$.} 
From \eqref{eq:gradient of h in x}, we only need to estimate the extension $h$ from above and $\sqrt{\Delta}$ from below. 
To estimate $h$ we have from \eqref{eq:upper bound for Delta} that
\[
h(x,P,b)\leq C_\kappa\,|P|,\qquad \text{for $|x|\leq 1+\epsilon$ and $\kappa|P|<b<|P|$,}
\] 
and to estimate $\sqrt{\Delta}$ from below we use \eqref{eq:lower bound for quantity inside the square root}
obtaining
\begin{equation}\label{eq:upper estimate of the gradient of h}
|\nabla_xh\(x,P,b\)|
\leq 
\dfrac{C_k }{\sqrt{C_0}}\,|P|^2\qquad \text{for all $|x|\leq 1+\epsilon$}
\end{equation}
when $b$ satisfies  
\eqref{eq:lower and upper estimate of b to avoid internal reflection} and conditions \ref{eq:hypotheses 1} and \ref{eq:hypotheses 2} hold.

{\bf Bounds for $h_b$:}
\begin{align*}
h_b&=\dfrac{1-\dfrac12 \Delta^{-1/2} \Delta_b}{1-\kappa^2}=\dfrac{1- \Delta^{-1/2} \kappa^2 (b-x\cdot P)}{1-\kappa^2}
=
\dfrac{\Delta^{1/2}-  \kappa^2 (b-x\cdot P)}{\(1-\kappa^2\)\sqrt{\Delta}}
=
\dfrac{1}{1-\kappa^2}+\dfrac{\kappa^2\,\(x\cdot P-b\)}{\(1-\kappa^2\)\sqrt{\Delta}}.
\end{align*}
From \eqref{eq:definition of Deltat}, $\Delta(x\cdot P,b,|P|)> \kappa^2\(x\cdot P-b\)^2$ for $|x|\leq 1$.  
If $b$ satisfies \eqref{eq:lower and upper estimate of b to avoid internal reflection}, then from \eqref{eq:b less than or equal to x.P} $x\cdot P\geq b$ (that avoids internal reflection), 
and so $\sqrt{\Delta}\geq \kappa\,\(x\cdot P-b\)$. Therefore
\begin{equation}\label{eq:G_b is negative}
\dfrac{1}{1-\kappa^2}\leq  h_b\leq \dfrac{1}{1-\kappa^2}+\dfrac{\kappa}{1-\kappa^2}=\dfrac{1}{1-\kappa}.
\end{equation}

{\bf Bounds for $\mathcal G_{ij}$:}

For $i\neq j$, $\kappa|P_i|<a\leq (1+\kappa) \,r_0+\kappa |P_i|$,
$\kappa|P_j|<b\leq (1+\kappa) \,r_0+\kappa |P_j|$, 
and $|x|\leq 1+\epsilon$ let's define
\begin{equation}\label{eq:definition of Gij difference in x}
\mathcal G_{ij}\(x,a,b\):=h\(x,P_i,a\)-h\(x,P_j,b\).
\end{equation}
From the analysis above this function is differentiable with respect to $x$ for $|x|\leq 1+\epsilon$.
Suppose at some $|x|\leq 1+\epsilon$ and for some $a,b$ with $\kappa|P_i|+\delta_i\leq a\leq (1+\kappa) \,r_0+\kappa |P_i|; \kappa|P_j|+\delta_j\leq b\leq (1+\kappa) \,r_0+\kappa |P_j|$ we would have
\begin{equation}\label{eq:vanishing gradient in x}
\nabla_x\mathcal G_{ij}\(x,a,b\)=0.
\end{equation}
Then from \eqref{eq:gradient of h in x}
\[
\dfrac{\kappa^{2}h(x,P_i,a)}{\sqrt{\Delta \(x\cdot P_i,a,|P_i|\)}}\,P_i
=
\dfrac{\kappa^{2}h(x,P_j,b)}{\sqrt{\Delta \(x\cdot P_j,b,|P_j|\)}}\,P_j
\]
and since the coefficients in front of $P_i$ and $P_j$ are not zero it follows that $P_i$ is a multiple of $P_j$, violating the visibility Condition \ref{eq:hypotheses 2} taking $X=0\in Q_{r_0}$.

Therefore by continuity
\begin{equation}\label{eq:lower bound for mathcal Gij in x}
\min_{\substack{\\ \kappa|P_i|+\delta_i\leq a \leq (1+\kappa) \,r_0+\kappa |P_i|\\ \kappa|P_j|+\delta_j\leq b\leq (1+\kappa) \,r_0+\kappa |P_j|}}\left|\nabla_x\mathcal G_{ij}\(x,a,b\) \right|= \lambda>0
\end{equation}
for all $x\in \Omega$.

The estimate in the following proposition will be used in the proof of Theorem \ref{thm:one sided Lipschitz estimate} via Proposition \ref{HausdorffEstimate}. Its proof requires the structural condition \eqref{eq:structural condition ij}.
$x(u)$ denotes a point in the unit sphere $S^{n-1}$ with $u=(u_1,\cdots ,u_{n-1})$ being spherical coordinates.

\begin{proposition}\label{prop:structural condition for lower estimates of gradients}
Fix $i,j$ with $1\leq i,j\leq N$ and $i\neq j$,
$F\subset \R^{n-1}$ with $x(F)=\Omega$.

We assume the following structural condition:
if $\Pi_{ij}$ is the plane containing the origin $O$ and the points $P_i,P_j$,
then 
\begin{equation}\label{eq:structural condition ij}
\Omega\cap \Pi_{ij}=\emptyset.
\end{equation}
If $\delta_i,\delta_j$ are positive, then 
there exists a constant $\lambda>0$ depending on $\delta_i,\delta_j$ and $F$ such that
\begin{equation}\label{eq:lower bound for mathcal Gij}
\min_{\substack{\\ \kappa|P_i|+\delta_i\leq a \leq (1+\kappa) \,r_0+\kappa |P_i|\\ \kappa|P_j|+\delta_j\leq b\leq (1+\kappa) \,r_0+\kappa |P_j|}}
\left|\nabla_u\(\mathcal G_{ij}\(x(u),a,b\)\) \right|\geq \lambda
\end{equation}
for all $u\in F$.\footnote{Notice that the minimum in \eqref{eq:lower bound for mathcal Gij} over the full range
$\kappa|P_i|< a \leq |P_i|;\,\, \kappa|P_j|< b\leq |P_j|$ is zero. Because, if for example, $b\to \(\kappa|P_j|\)^+$, then $\Delta(x(u)\cdot P_j,b)\to \kappa^2\(|P_j|-\kappa\,x\cdot P_j\)^2>0$ since $\kappa<1$. On the other hand, $h(x(u),P_j,b)\to 0$ as $b\to \(\kappa|P_j|\)^+$. Therefore from \eqref{eq:gradient of h in x}, $\nabla_u h\(x(u),P_j,b\)\to 0$ as $b\to \(\kappa|P_j|\)^+$; and similarly $\nabla_u h\(x(u),P_i,a\)\to 0$ as $a\to \(\kappa|P_i|\)^+$.}
\end{proposition}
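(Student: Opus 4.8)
The plan is to reduce the full gradient estimate \eqref{eq:lower bound for mathcal Gij} in spherical coordinates $u$ to the ambient estimate \eqref{eq:lower bound for mathcal Gij in x}, which has already been established for the extrinsic gradient $\nabla_x\mathcal G_{ij}$ on the range of parameters in question. The subtlety is that $\nabla_x$ here denotes the \emph{free} gradient in $\R^n$ of the extension of $h$ to $|x|\le 1+\epsilon$, while $\nabla_u$ is the intrinsic gradient on $S^{n-1}$; the two differ by the projection onto the tangent space $T_xS^{n-1}=x^\perp$. So the first thing to do is to relate these: if $x=x(u)$ and $J(u)$ is the Jacobian matrix $\partial x/\partial u$ (whose columns span $T_xS^{n-1}$), then $\nabla_u\big(\mathcal G_{ij}(x(u),a,b)\big)=J(u)^{T}\,\nabla_x\mathcal G_{ij}(x(u),a,b)$, and hence $|\nabla_u(\cdots)|$ is comparable to the norm of the tangential component $\operatorname{proj}_{x^\perp}\nabla_x\mathcal G_{ij}$, with constants depending only on $F$ (since $F$ is a fixed compact set parametrizing $\Omega$, the smallest singular value of $J(u)$ is bounded below uniformly on $F$).

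Given this reduction, it suffices to bound $|\operatorname{proj}_{x^\perp}\nabla_x\mathcal G_{ij}(x,a,b)|$ from below uniformly for $x\in\Omega$ and the parameters in the stated range. From \eqref{eq:gradient of h in x}, $\nabla_x\mathcal G_{ij}(x,a,b)=\alpha(x,a)P_i-\beta(x,b)P_j$ with $\alpha=\kappa^2 h(x,P_i,a)/\sqrt{\Delta(x\cdot P_i,a,|P_i|)}>0$ and $\beta=\kappa^2 h(x,P_j,b)/\sqrt{\Delta(x\cdot P_j,b,|P_j|)}>0$; by the bounds \eqref{eq:lower bound for quantity inside the square root}, \eqref{eq:upper bound for Delta}, and Lemma \ref{lm:minandmaxofovalskappa<1} together with the lower bounds $a\ge\kappa|P_i|+\delta_i$, $b\ge\kappa|P_j|+\delta_j$ (which force $h$ bounded below), both $\alpha$ and $\beta$ are bounded above and below by positive structural constants depending on $\delta_i,\delta_j$. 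Now project onto $x^\perp$: $\operatorname{proj}_{x^\perp}\nabla_x\mathcal G_{ij}=\alpha\,\operatorname{proj}_{x^\perp}P_i-\beta\,\operatorname{proj}_{x^\perp}P_j$. This vanishes precisely when $\alpha\,\operatorname{proj}_{x^\perp}P_i=\beta\,\operatorname{proj}_{x^\perp}P_j$, i.e. when the vectors $P_i,P_j,x$ are linearly dependent — equivalently, $x$ lies in the plane $\Pi_{ij}$ spanned by $O,P_i,P_j$. The structural hypothesis \eqref{eq:structural condition ij}, $\Omega\cap\Pi_{ij}=\emptyset$, rules this out, and by compactness of $\Omega$ the distance from $\Omega$ to $\Pi_{ij}$ is positive, so $|\operatorname{proj}_{x^\perp}P_i\wedge\operatorname{proj}_{x^\perp}P_j|$ (equivalently, the angle between $\operatorname{proj}_{x^\perp}P_i$ and $\operatorname{proj}_{x^\perp}P_j$) is bounded away from degeneracy uniformly on $\Omega$.

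To turn this qualitative non-vanishing into the quantitative lower bound $\lambda$, I would argue by compactness/continuity, mirroring the argument already used for \eqref{eq:lower bound for mathcal Gij in x}: the map $(x,a,b)\mapsto|\nabla_u\mathcal G_{ij}(x(u),a,b)|$ is continuous on the compact set $\overline\Omega\times[\kappa|P_i|+\delta_i,(1+\kappa)r_0+\kappa|P_i|]\times[\kappa|P_j|+\delta_j,(1+\kappa)r_0+\kappa|P_j|]$ (continuity is guaranteed because, by \eqref{eq:lower bound for quantity inside the square root}, the extension of $h$ and hence of $\nabla_x h$ is smooth on this range), it is strictly positive at every point by the above, hence it attains a positive minimum $\lambda>0$, which depends on $\delta_i,\delta_j$ and on $F$ (through the lower bound on the singular values of $J$ and through $\overline\Omega=x(\overline F)$). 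Alternatively, one can avoid compactness entirely by writing explicit lower bounds: decompose $|\operatorname{proj}_{x^\perp}\nabla_x\mathcal G_{ij}|^2=\alpha^2|\operatorname{proj}_{x^\perp}P_i|^2-2\alpha\beta\langle\operatorname{proj}_{x^\perp}P_i,\operatorname{proj}_{x^\perp}P_j\rangle+\beta^2|\operatorname{proj}_{x^\perp}P_j|^2$ and bound this from below using the uniform angle bound, but the compactness route is cleaner.

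\textbf{Main obstacle.} The substantive point is the geometric one: recognizing that the \emph{full} extrinsic gradient $\nabla_x\mathcal G_{ij}$ can be nonzero yet have zero tangential component when $x\in\Pi_{ij}$ — this is exactly why \eqref{eq:lower bound for mathcal Gij in x} (about $\nabla_x$) does not immediately give \eqref{eq:lower bound for mathcal Gij} (about $\nabla_u$), and why the new hypothesis \eqref{eq:structural condition ij} is needed rather than merely the visibility condition \ref{eq:hypotheses 2} used before. Everything else — the Jacobian comparison, the two-sided bounds on $\alpha,\beta$, and the final compactness argument — is routine given the estimates already assembled in this section.
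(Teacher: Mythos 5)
Your proof is correct and follows essentially the paper's own argument: the vanishing of $\nabla_u\mathcal G_{ij}$ is equivalent to the tangential component of $\nabla_x\mathcal G_{ij}$ vanishing (the paper phrases this via the orthogonal frame $\{x_{u_1},\dots,x_{u_{n-1}},x\}$, you via the projection onto $x^\perp$), which by \eqref{eq:gradient of h in x} forces $\alpha P_i-\beta P_j$ to be parallel to $x$, contradicting \eqref{eq:structural condition ij}, and the uniform constant $\lambda>0$ then comes from continuity and compactness exactly as in the paper. Your extra remarks on the Jacobian singular values and the two-sided bounds on $\alpha,\beta$ are fine but not needed for the compactness route.
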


\begin{proof}
By contradiction. Suppose at some $u\in F$ and for some $a,b$ with $\kappa|P_i|+\delta_i\leq a\leq (1+\kappa) \,r_0+\kappa |P_i|; \kappa|P_j|+\delta_j\leq b\leq (1+\kappa) \,r_0+\kappa |P_j|$ we have
\begin{equation}\label{eq:vanishing gradient}
\nabla_u\(\mathcal G_{ij}\(x(u),a,b\)\)=0.
\end{equation}
That is, $\dfrac{\partial }{\partial u_k}\(\mathcal G_{ij}\(x(u),a,b\)\)=\nabla_x \mathcal G_{ij}\(x(u),a,b\)\cdot x_{u_k}=0$ for $1\leq k\leq n-1$.
Since $\left\{x_{u_1},\cdots ,x_{u_{n-1}},x\right\}$ is an orthogonal frame, we get that the vector $\nabla_x \mathcal G_{ij}\(x(u),a,b\)$ is parallel to $x$.
From \eqref{eq:gradient of h in x} we then get that 
\[
\dfrac{\kappa^{2}h(x(u),P_i,a)}{\sqrt{\Delta \(x(u)\cdot P_i,a,|P_i|\)}}\,P_i
-
\dfrac{\kappa^{2}h(x(u),P_j,b)}{\sqrt{\Delta \(x(u)\cdot P_j,b,|P_j|\)}}\,P_j\text{ is parallel to } x(u),
\]
which is a contradiction with \eqref{eq:structural condition ij}.
\end{proof}

\begin{remark}\rm 
Therefore, if the target points $P_1,\cdots ,P_N$ satisfy 
\begin{equation}\label{eq:cross product condition}
\Omega \cap \Pi_{ij}=\emptyset,\quad \forall i\neq j,
\end{equation} 
then \eqref{eq:lower bound for mathcal Gij} holds for all $i\neq j$.
To understand this condition, let $\nu_{ij}$ be a normal to $\Pi_{ij}$, that is, $\nu_{ij}$ is parallel to the vector $\overrightarrow{OP_i}\times \overrightarrow{OP_j}$.
Given $\Omega\subset S^2$ in the upper sphere, let $\Omega^\perp\subset S^2$ the orthogonal set of vectors 
\[
\Omega^\perp=\{y\in S^2:\text{there exists $x\in \Omega$ such that $y\cdot x=0$}\}.
\]
So \eqref{eq:lower bound for mathcal Gij} holds for all $i\neq j$ if the set of vectors $\nu_{ij}$ is contained in the complement of $\Omega^\perp$.

For example, if the points $P_1,\cdots ,P_N$ lie on a plane through the origin that does not intersect $\Omega$, and so that any pair $(P_i,P_j)$ is not aligned with the origin, then \eqref{eq:cross product condition} holds.

\end{remark}

\begin{remark}\rm
To illustrate \eqref{eq:cross product condition}, suppose the target $D$ is contained on the plane $z=a$. We can select points in $D$ in the following way so that 
\eqref{eq:cross product condition} holds. Let $P_1\in D$ so that the line $OP_1$ does not intersect $\Omega$ and consider $\mathcal C_1$ the collection of all planes containing the points $O$ and $P_1$ that intersect $\Omega$. Pick $P_2\in D$ with $P_2\notin \mathcal C_1$. Let 
$\mathcal C_2$ be the collection of all planes containing the points $O$ and $P_2$ that intersect $\Omega$.
Pick $P_3\in D$ such that $P_3\notin \mathcal C_1\cup \mathcal C_2$. Next let $\mathcal C_3$ be the collection of all planes containing the points $O$ and $P_3$ that intersect $\Omega$, and pick $P_4\in D$ with $P_4\notin \mathcal C_1\cup \mathcal C_2\cup \mathcal C_3$. Continuing in this way we choose points in $D$ so that \eqref{eq:cross product condition} holds. 
\end{remark}

\section{Lipschitz estimate of the refractor map}\label{sec:Lipchitz estimate of the refractor map}
In this section, we will prove the one sided Lipschitz estimate \ref{eq:one sided Lipschitz estimate of Hi}, for the refractor measure. This result is a crucial ingredient to prove that the algorithm converges to the desired result in finitely many steps when applied to near field refractor problem.

Let $\mathbf{b} = (b_1, \cdots, b_N)  \in \prod_{i=1}^N\(\kappa|P_i|,|P_i|\)$ and let $\mathbf{e}_i$ be the unit direction in the $i$-th coordinate. 
For $\kappa|P_i| < b_i-t \leq b_i$, define $\mathbf{b}^t = \mathbf{b} - t \,\mathbf{e}_i$. 
The domain $\Omega\subset S^{n-1}$ of incident directions is identified with $F\subset \R^{n-1}$ so that $x(F)=\Omega$ where $x=x(u)$ and $u$ spherical coordinates. 
Next we define the sets 
\begin{equation}\label{voronoi1}
V^{\mathbf{b}}_{i,j} := \left\{x \in \Omega : h(x,P_i, b_i) \leq h(x,P_j, b_j) \right\},
\end{equation}
\begin{equation}\label{voronoi1t}
V^{\mathbf{b}^t}_{i,j} := \left\{x \in \Omega : h(x,P_i, b_i-t) \leq h(x,P_j, b_j) \right\},
\end{equation} 
\begin{equation}\label{voronoi2}
V^{\mathbf{b}}_i := \bigcap_{j \neq i} V^{\mathbf{b}}_{i,j} = \left\{x \in \Omega : \rho_{\mathbf{b}}(x) = h(x,P_i, b_i) \right\},
\end{equation}
where $\rho_{\mathbf{b}}(x)=\min_{1\leq k\leq N}h(x,P_k, b_k)$.

Since $t > 0$, from \eqref{eq:G_b is negative} $h$ is increasing in the last variable so $V^\b_{i, j} \subset V^{\b^t}_{i, j}$ for all $j \neq i$. Hence $V^\b_{i} \subset V^{\b^t}_{i}$. 
Since in the arguments in this section the vector $\mathbf{b} $ will be fixed,  we adopt the short-hand
\begin{equation*}
V_{i,j} := V^\b_{i,j}, \quad V^t_{i,j} := V^{\b^t}_{i,j}, \quad V_i := V^\b_{i}, \quad V^t_i := V^{\b^t}_{i}.
\end{equation*}
With this notation, for the refractor measure map given in Definition \ref{def:definition of refractor measure}, we have 
\begin{equation*}
\mathcal M_{\S({\bf b}),f} (P_i) =\int_{\mathcal T_{\mathcal S({\bf b})}(P_i)}
f(x)\,d\sigma(x)=\int_{V_i} f(x)\,d\sigma(x).
\end{equation*}
If for brevity we denote $\mathcal M_{\S({\bf b}),f} (P_i)$ by $H_i(\bf{b})$ we have ,
\begin{equation}\label{eq:definition of refractor measure Hi}
H_i\({\bf b}\) = \int_{V_i} f(x)\,d\sigma(x).
\end{equation}

Our goal is to prove the following one-sided Lipschitz estimate for  $H_i$.

\begin{theorem}\label{thm:one sided Lipschitz estimate}
Assume that \ref{eq:hypotheses 1} and \ref{eq:hypotheses 2} in Section \ref{subsec:statement of the problem} hold, and the target points $P_1,\cdots ,P_N$ satisfy \eqref{eq:cross product condition}.
Let $\delta_1,\cdots ,\delta_N$ be positive numbers and $\mathbf{b}=(b_1,\cdots ,b_N)$ satisfying
\begin{equation}\label{eq:bj are in a box away from kPi}
\kappa\, |P_j|+\delta_j\leq b_j\leq (1+\kappa) \,r_0+\kappa \, |P_j|,\qquad j=1,\cdots ,N.
\end{equation}
Then for each $1\leq i\leq N$ we have 
\begin{equation}\label{eq:one sided Lipschitz estimate of Hi}
0 \leq H_i(\mathbf{b}^t) - H_i(\mathbf{b})
\leq C_0\,\|f\|_{L^\infty(\Omega)}\, t
\end{equation}
for all $t$ with $\kappa\,|P_i|< b_i-t\leq b_i\leq (1+\kappa) \,r_0+\kappa \,|P_i|$,  where $\mathbf{b}^t = \mathbf{b} - t \,\mathbf{e}_i$.
$C_0$ is a positive constant depending only on the bounds for the derivatives up to order two of the functions $h\(x(u),P_i,b_i\)$ over $u\in F$ $\(\Omega=x(F)\)$ and over $b_i$
satisfying \eqref{eq:lower and upper estimate of b to avoid internal reflection};  in addition $C_0$ depends also on $\delta_i$, $N$, $\kappa$, and the constants in 
\ref{eq:hypotheses 1} and \ref{eq:hypotheses 2}.

\end{theorem}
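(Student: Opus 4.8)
The plan is to reduce the estimate to a control of the surface measure of the symmetric difference $V_i^t \setminus V_i$, since $H_i(\mathbf{b}^t) - H_i(\mathbf{b}) = \int_{V_i^t \setminus V_i} f\,d\sigma$ (the inclusion $V_i \subset V_i^t$ being already established, which gives the lower bound $0$ in \eqref{eq:one sided Lipschitz estimate of Hi} for free). Then the whole problem becomes geometric: show that
\[
\sigma\bigl(V_i^t \setminus V_i\bigr) \leq C_0\, t,
\]
after which bounding $f$ by $\|f\|_{L^\infty(\Omega)}$ finishes it. To see where the $t$ comes from, observe that $V_i^t\setminus V_i$ is the set of directions $x$ that switch from "closest to $P_j$ for some $j\ne i$" to "closest to $P_i$" when $b_i$ is decreased by $t$. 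A point $x$ is in this set only if, for some $j$, one has $0 \le h(x,P_j,b_j) - h(x,P_i,b_i) \le h(x,P_i,b_i) - h(x,P_i,b_i-t)$, and by the upper bound \eqref{eq:G_b is negative} on $h_b$ the right-hand side is at most $\frac{1}{1-\kappa}\, t$. So $V_i^t\setminus V_i$ is contained in the union over $j\ne i$ of thin slabs
\[
\Bigl\{x\in\Omega : 0 \le \mathcal G_{ji}(x,b_j,b_i) \le \tfrac{t}{1-\kappa}\Bigr\},
\]
i.e. a neighborhood of the level set $\{\mathcal G_{ji}=0\}$ of width proportional to $t$.

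The key step is therefore the standard "sublevel set of a function with nonvanishing gradient has small measure" lemma, applied to $\mathcal G_{ji}(x(u),b_j,b_i)$ in spherical coordinates $u\in F$. This is exactly where Proposition~\ref{prop:structural condition for lower estimates of gradients} enters: under the structural hypothesis \eqref{eq:cross product condition}, we have $|\nabla_u \mathcal G_{ji}(x(u),b_j,b_i)| \ge \lambda > 0$ uniformly over $u\in F$ and over $b_j,b_i$ in the admissible boxes determined by $\delta_j,\delta_i$ (here the hypothesis $b_j \ge \kappa|P_j|+\delta_j$ is essential, since by the footnote to that Proposition the gradient degenerates as $b_j\to(\kappa|P_j|)^+$). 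Combined with an upper bound on the second derivatives of $h(x(u),P_i,b_i)$ — available from the gradient formula \eqref{eq:gradient of h in x} and the bounds \eqref{eq:upper bound for Delta}, \eqref{eq:lower bound for quantity inside the square root} after one more differentiation — one gets, via the implicit function theorem or a direct coarea/foliation argument, that the slab $\{0\le \mathcal G_{ji}\le s\}$ has $(n-1)$-dimensional Lebesgue measure in $u$ at most $C s/\lambda$, hence surface measure on $\Omega$ at most $C' s/\lambda$. Taking $s = t/(1-\kappa)$ and summing the $N-1$ slabs gives $\sigma(V_i^t\setminus V_i) \le C_0 t$ with $C_0$ depending on $N$, $\kappa$, $\lambda=\lambda(\delta_i,\delta_j,F)$, and the second-derivative bounds, as claimed. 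I expect this measure-of-slab step — quantifying the constant in terms of the $C^2$ bounds on the ovals and the lower gradient bound $\lambda$ — to be packaged as a separate geometric proposition (the excerpt refers to "Proposition~\ref{HausdorffEstimate}" and a Hausdorff/Lipschitz estimate), and I would invoke it here rather than reprove it.

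The main obstacle is making the slab-measure estimate genuinely uniform: the constant must not blow up as $b_i,b_j$ range over their boxes or as $x$ ranges over $\Omega$, and it must capture the dependence on $\delta_i$ correctly. The clean way to do this is to note that on the compact parameter set
\[
\{(u,a,b): u\in \overline F,\ \kappa|P_i|+\delta_i\le a\le (1+\kappa)r_0+\kappa|P_i|,\ \kappa|P_j|+\delta_j\le b\le (1+\kappa)r_0+\kappa|P_j|\}
\]
both $|\nabla_u\mathcal G_{ji}|$ (bounded below by $\lambda$ via Proposition~\ref{prop:structural condition for lower estimates of gradients}) and $\|D^2_u h\|$ (bounded above) are controlled by continuity, so the implicit-function-theorem normalization of the level set $\{\mathcal G_{ji}=c\}$ is uniform. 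A secondary point to handle with care is that $V_i^t$ involves the oval with parameter $b_i - t$, not $b_i$, but since $t$ is small and $b_i-t > \kappa|P_i|$ stays bounded away from $\kappa|P_i|+$ (indeed one may shrink $\delta_i$ slightly to absorb this, or simply note $b_i - t$ need not lie in the $\delta_i$-box — only $b_i$ does — and the comparison $h(x,P_i,b_i)-h(x,P_i,b_i-t)\le \frac{t}{1-\kappa}$ used above requires only \eqref{eq:G_b is negative}, which holds on the full range $\kappa|P_i|<b<|P_i|$), the estimate on the defining function $\mathcal G_{ji}(\cdot,b_j,b_i)$ is still at a parameter $b_i$ in the good box, which is all that is needed.
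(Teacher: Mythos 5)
Your proposal follows essentially the same route as the paper's proof: reduce to the surface measure of $V_i^t\setminus V_i$, contain it in the union over $j\neq i$ of slabs of width $O(t)$ around the zero level set of $\mathcal G_{ij}$ using the bound \eqref{eq:G_b is negative} on $h_b$ and the mean value theorem, and then bound each slab's measure by the coarea formula combined with the lower gradient bound of Proposition \ref{prop:structural condition for lower estimates of gradients} and the uniform level-set Hausdorff estimate of Proposition \ref{HausdorffEstimate}, exactly as the paper does. The only slip is a sign in your slab description (on $V^t_{i,j}\setminus V_{i,j}$ one has $0\leq h(x,P_i,b_i)-h(x,P_j,b_j)\leq C(\kappa)\,t$, i.e. $\mathcal G_{ij}\geq 0$ rather than $\mathcal G_{ji}\geq 0$), which is immaterial to the measure estimate.
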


\begin{proof}
We have from Lemma \ref{monotonicityoftrace}
that \begin{align*}
0 \leq H_i(\mathbf{b}^t) - H_i(\mathbf{b}) 
& = \int\limits_{V^t_i \backslash V_i} f(x) \ d\sigma(x).
\end{align*}

\noindent Using \eqref{voronoi2} we obtain
\begin{align*}
V^t_i \backslash V_i
&=
\left\{\cap_{j\neq i} V_{i,j}^t\right\}\setminus \left\{\cap_{r\neq i} V_{i,r} \right\}=
\left\{ \cap_{j\neq i} V_{i,j}^t\right\}\cap 
\left\{\left( \cap_{r\neq i} V_{i,r}\right)^c \right\}\\
&=
\left\{ \cap_{j\neq i} V_{i,j}^t\right\}\cap 
\left\{ \cup_{r\neq i} V_{i,r}^c \right\}= 
 \cup_{r\neq i}\left\{V_{i,r}^c\cap \left\{ \cap_{j\neq i} V_{i,j}^t\right\} 
  \right\}\\
  &\subset  \cup_{r\neq i}\left\{V_{i,r}^c\cap V_{i,r}^t
  \right\}=  \bigcup_{r\neq i}\left(V_{i,r}^t\setminus V_{i,r} 
  \right) .
\end{align*}

\noindent Hence,
\begin{equation}\label{containmentinannulus}
V^t_i \setminus V_i \subset \bigcup_{j \neq i} \left(V^t_{i,j} \setminus V_{i,j} \right).
\end{equation}

\noindent It follows that
\begin{align}
0 \leq H_i(\mathbf{b}^t) - H_i(\mathbf{b}) & = \int\limits_{V^t_i \setminus V_i} f(x) \ d\sigma(x)\notag \\
& \leq \|f\|_{L^{\infty}(\Omega)}\, \sigma\left(\bigcup_{j \neq i} \left(V^t_{i,j} \setminus V_{i,j} \right) \right)\notag \\
& \leq \|f\|_{L^{\infty}(\Omega)} \sum\limits_{j \neq i} \sigma \left(V^t_{i,j} \setminus V_{i,j} \right),\label{eq:sumofmeasuresforjneqi}
\end{align}
where $\sigma$ denotes the area measure in the sphere $S^{n-1}$.
We proceed to estimate $\sigma\left(V^t_{i,j} \setminus V_{i,j} \right)$ for $j \neq i$. Notice that, by definition of $V_{i,j}$,
\begin{align*}
V^t_{i,j} \setminus V_{i,j} 
& = 
\left\{x \in \Omega : h(x,P_i,b_i) \geq h(x,P_j,b_j) \geq h(x,P_i,b_i - t) \right\} \\
& = \left\{x \in \Omega : 0 \leq  h(x,P_i,b_i) - h(x,P_j,b_j) \leq h(x,P_i,b_i) - h(x,P_i,b_i-t) \right\}.
\end{align*}
If $E$ is a subset of the upper part of the sphere $S^{n-1}$, and $x(u)\in E$ with $u$ spherical coordinates, then there is $E'\subset \R^{n-1}$ such that $E=x(E')$ and from the formula of change of variables
\[
\int_E f(x)\,d\sigma(x)=\int_{E'} f(x(u))\,\sqrt{\det Dx^T Dx }\,du,
\]
where $Dx=\dfrac{\partial x_k}{\partial u_j}$, $x=(x_1,\cdots ,x_n)$, $u=(u_1,\cdots ,u_{n-1})$.
Recall $\Omega=x(F)$ with $F\subset \R^{n-1}$.
Since $V^t_{i,j} \setminus V_{i,j}\subset \Omega\subset S^{n-1}$, let $F^t_{i,j}\subset \R^{n-1}$
\[
V^t_{i,j} \setminus V_{i,j}=x\(F^t_{i,j}\),
\]
and so the surface measure
\[
\sigma \(V^t_{i,j} \setminus V_{i,j}\)\leq c\,|F^t_{i,j}|_{n-1},
\]
where $|\cdot |_{n-1}$ denotes the $(n-1)$-dimensional Lebesgue measure and $c$ a constant.
If 
\begin{equation}\label{eq:inequality for bi-t}
\kappa|P_i|<b_i-t\leq (1+\kappa) \,r_0+\kappa |P_i|,
\end{equation}
then from the bound \eqref{eq:G_b is negative} for $h_b$ - only depending on $\kappa$ - and the mean value theorem we get 
\begin{align*}
h(x,P_i,b_i) - h(x,P_i,b_i-t) &= h_b(x,P_i, \xi_i) \cdot t \\
& \leq C(\kappa)\, t,
\end{align*}
for all $x\in \Omega$.
Therefore
\begin{equation}\label{generalizedannulussubsetH}
F^t_{i,j} \subset \left\{u \in F : 0 < \mathcal{G}_{ij}(x(u),b_i,b_j) \leq C(\kappa)\,t \right\},
\end{equation}
for all $t$ satisfying \eqref{eq:inequality for bi-t} and $j\neq i$, with $\mathcal{G}_{ij}$ from \eqref{eq:definition of Gij difference in x}.
 
The last set is a region contained between two level sets of the function $\mathcal{G}_{ij}$ and we now estimate the measure of this set. Let us first recall the co-area formula, \cite[Section 3.4.2, Theorem 1]{EG}.

\begin{proposition} Let $\psi: \mathbb{R}^n \rightarrow \mathbb{R}$ be Lipschitz, and $\Sigma \subset \mathbb{R}^n$ measurable. Then 
\begin{equation}\label{coareaformula}
\int\limits_{\Sigma} |D\psi(x)| \ dx = \int\limits_{-\infty}^{\infty} \mathcal{H}^{n-1} (\Sigma \cap \psi^{-1}(s)) \ ds,
\end{equation}

\noindent where $\mathcal{H}^{n-1}(\cdot)$ denotes $(n-1)$-dimensional Hausdorff measure.

\end{proposition}

This has the following simple corollary.

\begin{corollary} Let $\psi: \bar \Omega \rightarrow \mathbb{R}$ be Lipschitz, with $\inf_\Omega |D\psi| \geq \lambda > 0$, $-\infty \leq a \leq b \leq \infty$ and $\Omega\subset \R^n$ a bounded set. 
Then
\begin{equation}\label{CoAreaCorollary1}
\mathcal{L}^n(\left\{x \in \Omega : a \leq \psi(x) \leq b \right\}) \leq \frac{1}{\lambda} \int\limits_{a}^{b} \mathcal{H}^{n-1} \left(\Omega\cap\psi^{-1}(s)\right) \, ds,
\end{equation}
$\mathcal{L}^n$ being the $n$-dimensional Lebesgue measure.
\end{corollary}

From \eqref{eq:upper estimate of the gradient of h} and the lower bound \eqref{eq:lower bound for mathcal Gij} we can apply the corollary to the function $\psi(u)=\mathcal{G}_{ij}(x(u),b_i,b_j)$, $j\neq i$, to conclude that
\begin{equation}\label{CoAreaCorollary2}
\mathcal{L}^{n-1}\left(\left\{u \in F : 0 < \mathcal{G}_{ij}(x(u),b_i,b_j) \leq C\,t \right\} \right) \leq \frac{1}{\lambda} \int\limits_0^{C\,t} \mathcal{H}^{n-2} \left(F \cap \mathcal{G}_{ij}^{-1}(s)\right) \ ds;\quad C=C(\kappa).
\end{equation}

We now show that the integrand on the right hand side of \eqref{CoAreaCorollary2} is uniformly bounded for each $s$ in the range of $\mathcal{G}_{ij}$. For this, we need the following \cite[Prop. 5.5]{AG17}.

\begin{proposition}\label{HausdorffEstimate} Suppose $\Omega \subset \mathbb{R}^n$ is a smooth, bounded domain and $\psi \in C^2(\bar \Omega)$ satisfies $\min\limits_{x \in \overline{\Omega}} |D\psi(x)| \geq \lambda > 0$ and $\|D\psi\|_{L^{\infty}(\Omega)}, \|D^2 \psi\|_{L^{\infty}(\Omega)}$ are both finite. For any $s \in \mathrm{Range}(\psi)$, let $\Gamma_s = \left\{x \in \Omega : \psi(x) = s \right\}$. Then there exists a constant $K = K\left(\lambda, \|D\psi\|_{L^{\infty}(\Omega)}, \|D^2 \psi\|_{L^{\infty}(\Omega)}\right)$ such that
\begin{equation}\label{HausdorffMeasureEstimate}
\mathcal{H}^{n-1}(\Gamma_s) \leq \mathcal{H}^{n-1}(\partial \Omega) + K\, \mathcal{L}^n(\Omega).
\end{equation}
\end{proposition}
We can now complete the proof of the desired Lipschitz estimate. 
Assuming \eqref{eq:cross product condition}, then \eqref{eq:lower bound for mathcal Gij} holds and so we can apply Proposition \ref{HausdorffEstimate} when $n\rightsquigarrow n-1$, with $\Omega$ replaced by $F$, to the function $\psi(u) = \mathcal{G}_{ij}(x(u),b_i,b_j)$,
for $b_i$ and $b_j$ satisfying $\kappa|P_i|+\delta_i\leq b_i \leq (1+\kappa) \,r_0+\kappa |P_i|$, $\kappa|P_j|+\delta_j\leq b_j \leq (1+\kappa) \,r_0+\kappa |P_j|$,
 provided $\|D\psi\|_{L^{\infty}(F)}, \|D^2 \psi\|_{L^{\infty}(F)}$ are both finite.
That $\|D\psi\|_{L^{\infty}(F)}<\infty$ follows from \eqref{eq:bj are in a box away from kPi} and \eqref{eq:upper estimate of the gradient of h}, and that $\|D^2 \psi\|_{L^{\infty}(F)}<\infty$ follows computing $D^2\psi$ using \eqref{eq:gradient of h in x}, \eqref{eq:lower bound for quantity inside the square root}, and 
\eqref{eq:upper estimate of the gradient of h}. 
Therefore \eqref{CoAreaCorollary2} implies
\begin{equation*}
\mathcal{L}^{n-1} \left(\left\{u \in F : 0 < \mathcal{G}_{ij}(x(u),b_i,b_j) \leq C\,t \right\}\right) 
\leq \frac{C\,t}{\lambda} \(\mathcal{H}^{n-2}(\partial F) + K \mathcal{L}^{n-1}(F)\),\quad j\neq i.
\end{equation*}
Hence from \eqref{generalizedannulussubsetH}
\begin{align*}
\sigma \(V^t_{i,j} \setminus V_{i,j}\)\leq c\,|F^t_{i,j}|_{n-1}
\leq
\frac{C\,t}{\lambda} \,\(\mathcal{H}^{n-2}(\partial F) + K \,\mathcal{L}^{n-1}(F)\),\qquad j\neq i
\end{align*}
for each $t$ satisfying \eqref{eq:inequality for bi-t}.
Finally, adding these inequalities over $j\neq i$, from \eqref{eq:sumofmeasuresforjneqi} we then obtain the desired Lipschitz estimate \eqref{eq:one sided Lipschitz estimate of Hi}
with $C_0$ a constant depending on $F$, $N$, $\kappa$, $\delta_i$, and bounds for the derivatives up to order two of $h$. 
\end{proof}

\section{Admissible vectors for the iterative method}\label{subsec:geometric assumptions on Omega and D}

Recall we have distinct points $P_1,\cdots ,P_N$ in $D$, $g_i>0$, $i=1,\cdots ,N$ satisfying the conservation condition 
\eqref{energy conservation condition} 
where $f>0$ a.e. in $\Omega$. 
And also assume the configuration conditions \ref{eq:hypotheses 1} and \ref{eq:hypotheses 2}. 

In the following proposition we introduce the set of admissible vectors that will be used in the iterative method.
We remark that the Proposition gives that vectors in the admissible set $W_\delta$ have components bounded uniformly away from $\kappa|P|$.
\begin{proposition}\label{prop:lower estimate of bi}
Suppose $b_1$ satisfies
\begin{equation}\label{eq:condition on b1}
\kappa|P_1|<b_1\leq (1-\kappa) \,r_0+\kappa |P_1|, \footnote{Notice this implies that $b=b_1$ satisfies the weaker inequality \eqref{eq:lower and upper estimate of b to avoid internal reflection}. A reason to assume \eqref{eq:condition on b1} is to show the set $W_\delta$ is non empty.} 
\end{equation}
and let $0<\delta<g_1/(N-1)$. Consider the set\footnote{Notice that the bounds for $b_j$ imply from \eqref{eq:lower and upper estimate of b to avoid internal reflection} that the oval $h(x,P_j,b_j)$ refracts all $x\in \Omega$ into $P_j$.}
\begin{align*}
W_\delta=
&\left\{ \(b_2,\cdots ,b_N\):\kappa |P_i|<b_i<(1+\kappa) \,r_0+\kappa |P_i| \right.\\
&\qquad \left. \text{and $\int_{\mathcal T_{\S({\bf b})}(P_i)}f(x)\,dx\leq g_i+\delta$ for $2\leq i\leq N$ with
${\bf b}=(b_1,b_2,\cdots ,b_N)$} \right\}.
\end{align*}
Notice that from \ref{eq:hypotheses 1} and \ref{eq:hypotheses 2}, $(1+\kappa)r_0+\kappa |P_j|<|P_j|$.

Then $W_\delta\neq \emptyset$ and for $0<\alpha:=\dfrac{1-\kappa}{1+\kappa}\,\(b_1-\kappa |P_1|\)\(<(1+\kappa)\,r_0\)$ we have 
\begin{equation}\label{eq:lower estimate for  bi}
\text{if $(b_2,\cdots, b_N)\in W_\delta$ then $b_i\geq \kappa|P_i|+\alpha$ for $2\leq i\leq N$}.
\end{equation}
\end{proposition}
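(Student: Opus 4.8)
The plan is to establish \eqref{eq:lower estimate for bi} by contradiction, exploiting the monotonicity of the refractor measure (Lemma \ref{monotonicityoftrace}) together with the extreme cases of Lemma \ref{lm:minandmaxofovalskappa<1}. The idea is that if some component $b_i$ is too close to $\kappa|P_i|$, then the oval $h(x,P_i,b_i)$ becomes uniformly so small that it lies beneath all the other ovals over a large portion of $\Omega$; this forces $\mathcal{T}_{\S({\bf b})}(P_i)$ to be large, hence $\int_{\mathcal{T}_{\S({\bf b})}(P_i)}f\,dx$ large, eventually exceeding $g_i+\delta$ and contradicting membership in $W_\delta$.

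First I would show $W_\delta\neq\emptyset$. Take $b_j$ for $2\leq j\leq N$ all very close to $\kappa|P_j|$; then by Lemma \ref{lm:minandmaxofovalskappa<1}, $h(x,P_j,b_j)\leq (b_j-\kappa|P_j|)/(1-\kappa)$ can be made smaller than $(b_1-\kappa|P_1|)/(1+\kappa)\leq\min_x h(x,P_1,b_1)$, so that $P_1\notin\mathcal{R}_{\S({\bf b})}(x)$ for every $x$; more carefully, by ordering the $b_j$ appropriately one arranges $\mathcal{T}_{\S({\bf b})}(P_i)=\emptyset$ for all $i\geq 3$ and checks $\int_{\mathcal{T}_{\S({\bf b})}(P_2)}f\,dx\leq\int_\Omega f\,dx$; this is not yet enough, so instead I would argue more quantitatively: by the continuity/limit argument as in Lemma \ref{forlarge_b_energyis0}, one can choose the $b_j$ small enough that each $\mathcal{T}_{\S({\bf b})}(P_j)$ has $f$-measure $\leq g_j+\delta$. (Alternatively, one invokes the existence theorem \cite[Thm.~5.7]{GH14}: the exact solution ${\bf b}$ with $\mathcal{M}_{\S({\bf b}),f}(P_i)=g_i$ satisfies all the constraints with room to spare, provided its components lie in the prescribed boxes, which follows from \eqref{eq:condition on b1}.)

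For \eqref{eq:lower estimate for bi}, fix $(b_2,\dots,b_N)\in W_\delta$ and suppose for contradiction that $b_i<\kappa|P_i|+\alpha$ for some $2\leq i\leq N$. By Lemma \ref{lm:minandmaxofovalskappa<1},
\[
\max_{x\in S^{n-1}} h(x,P_i,b_i)=\frac{b_i-\kappa|P_i|}{1-\kappa}<\frac{\alpha}{1-\kappa}=\frac{b_1-\kappa|P_1|}{1+\kappa}=\min_{x\in S^{n-1}} h(x,P_1,b_1).
\]
Thus $h(x,P_i,b_i)<h(x,P_1,b_1)$ for \emph{all} $x\in\Omega$, so $V^{\bf b}_{i,1}=\Omega$ and $P_1\notin\mathcal{R}_{\S({\bf b})}(x)$ for a.e.\ $x$; equivalently $\mathcal{T}_{\S({\bf b})}(P_1)$ has $f$-measure zero. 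Therefore $\Omega=\bigcup_{j\neq 1}\mathcal{T}_{\S({\bf b})}(P_j)$ up to measure zero, whence, using the energy conservation \eqref{energy conservation condition} and the $W_\delta$ constraint on the indices $j\in\{2,\dots,N\}\setminus\{i\}$,
\[
\int_{\mathcal{T}_{\S({\bf b})}(P_i)} f\,dx=\int_\Omega f\,dx-\sum_{\substack{j=2\\ j\neq i}}^N\int_{\mathcal{T}_{\S({\bf b})}(P_j)}f\,dx\geq \sum_{k=1}^N g_k-\sum_{\substack{j=2\\ j\neq i}}^N (g_j+\delta)=g_1+g_i-(N-2)\delta.
\]
Since $\delta<g_1/(N-1)$ we have $g_1-(N-2)\delta>g_1-(N-1)\delta>0$, hence $\int_{\mathcal{T}_{\S({\bf b})}(P_i)}f\,dx>g_i$; in fact $\int_{\mathcal{T}_{\S({\bf b})}(P_i)}f\,dx\geq g_i+g_1-(N-2)\delta>g_i+\delta$, the last inequality because $g_1-(N-2)\delta>\delta$ is equivalent to $g_1>(N-1)\delta$. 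This contradicts $(b_2,\dots,b_N)\in W_\delta$, and \eqref{eq:lower estimate for bi} follows.

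The main obstacle I anticipate is the nonemptiness of $W_\delta$: one must exhibit a genuinely admissible vector, and the cleanest route is to combine the limiting behavior from Lemma \ref{forlarge_b_energyis0} (driving $b_j\to(\kappa|P_j|)^+$ makes the corresponding refractor measure tend to $\int_\Omega f$, which is the wrong direction) with the ordering trick so that only one index absorbs the bulk of the energy — this requires care that, upon sending the \emph{other} $b_j$'s to their lower limits, each of their refractor measures shrinks to zero. The inequality \eqref{eq:lower estimate for bi} itself is then a short consequence of Lemma \ref{lm:minandmaxofovalskappa<1} and the combinatorial bookkeeping above; the choice of $\alpha$ is dictated exactly by matching $\max h(\cdot,P_i,b_i)$ against $\min h(\cdot,P_1,b_1)$, and the restriction $\delta<g_1/(N-1)$ is precisely what makes the final strict inequality go through.
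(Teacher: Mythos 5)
Your proof of the inequality \eqref{eq:lower estimate for  bi} is essentially correct and is the contrapositive of the paper's argument: the paper locates a nonsingular point of $\mathcal T_{\S({\bf b})}(P_1)$ (which has positive measure by the $W_\delta$ constraints and the energy balance) and compares ovals there, while you assume some $b_i<\kappa|P_i|+\alpha$, use Lemma \ref{lm:minandmaxofovalskappa<1} to put the $P_i$ oval strictly below the $P_1$ oval, and derive a contradiction from the bookkeeping and $\delta<g_1/(N-1)$; both rest on the same facts. One point to make explicit: the step ``$h(x,P_i,b_i)<h(x,P_1,b_1)$ for all $x\in\Omega$, hence $\mathcal T_{\S({\bf b})}(P_1)$ is $f$-null'' is not a tautology about $V^{\bf b}_{i,1}$; it uses that at a nonsingular point of $\mathcal T_{\S({\bf b})}(P_1)$ the supporting oval is exactly $\v_{b_1}(P_1)$ (the fact from \cite{GH14} quoted in the proof of Lemma \ref{monotonicityoftrace}) together with the refractor being Lipschitz, so singular points are null. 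Cite that rather than writing ``equivalently''.

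The genuine gap is the nonemptiness of $W_\delta$. Your primary construction sends $b_j\to(\kappa|P_j|)^+$ for $j\geq 2$, which is the wrong direction and cannot work: if the $b_j$ are small enough that the $P_j$ ovals lie below the $P_1$ oval, then $\mathcal T_{\S({\bf b})}(P_1)$ is null, so $\sum_{j=2}^N\int_{\mathcal T_{\S({\bf b})}(P_j)}f\,dx=\int_\Omega f\,dx=\sum_{k=1}^N g_k$ by \eqref{energy conservation condition}; admissibility of such a vector would then force $g_1\leq (N-1)\delta$, contradicting $\delta<g_1/(N-1)$. Lemma \ref{forlarge_b_energyis0} says precisely that shrinking $b_j$ inflates the measure at $P_j$, it does not shrink it — you notice this tension at the end but never resolve it. Your fallback, invoking \cite[Thm.~5.7]{GH14}, is also not justified as written: that theorem is quoted under $b_1<\kappa|P_1|+r_0\,\frac{(1-\kappa)^2}{1+\kappa}$, which is strictly stronger than hypothesis \eqref{eq:condition on b1}, and even where it applies you would still have to prove that the exact solution's components satisfy $b_i<(1+\kappa)r_0+\kappa|P_i|$; this does not ``follow from \eqref{eq:condition on b1}'' but would need a separate argument (e.g. that the solution lies in $Q_{r_0}$ combined with the lower bound in \eqref{eq:lowerestimateofrho}).

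The correct construction goes the opposite way, and this is exactly where \eqref{eq:condition on b1} enters: write $b_1-\kappa|P_1|=\sigma(1-\kappa)r_0$ with $0<\sigma\leq 1$ and set $b_j=\kappa|P_j|+\sigma(1+\kappa)r_0$ for $j\geq 2$. Then Lemma \ref{lm:minandmaxofovalskappa<1} gives
\begin{equation*}
h(x,P_1,b_1)\leq \frac{b_1-\kappa|P_1|}{1-\kappa}=\sigma r_0=\frac{b_j-\kappa|P_j|}{1+\kappa}\leq h(x,P_j,b_j),\qquad x\in\Omega,\ j\geq 2,
\end{equation*}
so each $\mathcal T_{\S({\bf b})}(P_j)$, $j\geq 2$, has surface measure zero and the constraints $\int_{\mathcal T_{\S({\bf b})}(P_j)}f\,dx\leq g_j+\delta$ hold trivially, proving $W_\delta\neq\emptyset$.
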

\begin{proof}
Let us first prove the second part of the proposition. 
Let $(b_2,\cdots, b_N)\in W_\delta$ and consider ${\bf b} = (b_1, b_2, \cdots, b_N).$
Since $\mathcal T_{\S({\bf b})}\(\cup_1^N P_i\)=\Omega$ and $\mathcal T_{\S({\bf b})}(P_i)\cap \mathcal T_{\S({\bf b})}(P_j)$ has surface measure zero for $i\neq j$, we have
\[
\int_\Omega f(x)\,dx
=
\sum_1^N \int_{\mathcal T_{\S({\bf b})}(P_i)}f(x)\,dx
\leq 
\int_{\mathcal T_{\S({\bf b})}(P_1)}f(x)\,dx+\sum_2^N \(g_i+\delta\)
\]
which from \eqref{energy conservation condition} implies that
\[
\int_{\mathcal T_{\S({\bf b})}(P_1)}f(x)\,dx\geq g_1-(N-1)\,\delta>0.
\]
Since $f>0$ a.e., we then get that the surface measure of the set $\mathcal T_{\S({\bf b})}(P_1)$ is positive. 
From \cite[Lemma 5.3]{GH14}, $\S({\bf b})$ is Lipschitz and so the set of singular points has measure zero. 
Hence there exists a point $x_0\in \mathcal T_{\S({\bf b})}(P_1)$ non singular for $\rho$. That is, there exists $\kappa|P_1|<\bar b<|P_1|$ such that the oval with radius  $h\(x,P_1,\bar b\)$ supports $\rho$ at $x_0$, that is, $\rho(x)\leq h\(x,P_1,\bar b\)$ for all $x\in \Omega$ with equality at $x=x_0$. On the other hand, by definition of $\rho$, $\rho(x)\leq h(x,P_1,b_1)$ and so $h\(x_0,P_1,\bar b\)=\rho(x_0)\leq h(x_0,P_1,b_1)$ implying $\bar b\leq b_1$. We claim that $\bar b=b_1$. If it were $\bar b<b_1$, then $\rho(x)\leq h\(x,P_1,\bar b\)< h\(x,P_1,b_1\)$ for all $x\in \Omega$. Hence $\rho(x)=\min_{2\leq i\leq N}h(x,P_i,b_i)$, and so at $x_0$ there would exist $h(\cdot ,P_i,b_i)$, for some $i\neq 1$, supporting $\rho$ at $x_0$.
That is, the ovals $h\(\cdot ,P_1,\bar b\)$ and $h(\cdot ,P_i,b_i)$ with $i\neq 1$ would support $\rho$ at $x_0$ and therefore $x_0$ would be a singular point, a contradiction. The claim is then proved.
Hence $h(x_0,P_1,b_1)=\rho(x_0)\leq h(x_0,P_i,b_i)$  for $i\neq 1$.
From the estimates for the ovals in Lemma \ref{lm:minandmaxofovalskappa<1} we have
\[
h(x_0,P_1,b_1)\geq \dfrac{b_1-\kappa|P_1|}{1+\kappa}
\quad \text{and} \quad 
h(x_0,P_i,b_i)\leq \dfrac{b_i-\kappa|P_i|}{1-\kappa},\quad i\neq 1,
\]
implying
\[
b_i\geq \kappa |P_i|+\dfrac{1-\kappa}{1+\kappa}\,\(b_1-\kappa |P_1|\)=\kappa|P_i|+\alpha,\quad i\neq 1,
\]
with $\alpha:=\dfrac{1-\kappa}{1+\kappa}\,\(b_1-\kappa |P_1|\)$. 
From \eqref{eq:condition on b1} and since $\dfrac{1-\kappa}{1+\kappa}<1$, 
$
\alpha \leq (1-\kappa)r_0(<(1+\kappa)r_0),
$
for $i\neq 1$, showing \eqref{eq:lower estimate for  bi}.

Finally, to show $W_\delta\neq \emptyset$, let $b_1$ satisfy \eqref{eq:condition on b1} and 
construct $(b_2,\cdots ,b_N)\in W_\delta$.
For this, it is enough to show the existence of $\kappa|P_j|<b_j<(1+\kappa)r_0+\kappa |P_j|$ such that $h(x,P_1,b_1)\leq h(x,P_j,b_j)$ for $2\leq j\leq N$ and $x\in \Omega$. Because with this choice we would have that $\mathcal T_{\S({\bf b})}(P_j)$ has surface measure zero for $j\geq 2$. 
We write $b_1-\kappa|P_1|=\sigma (1-\kappa)r_0$ for some $0<\sigma\leq1$, and let $b_j=\kappa|P_j|+\sigma (1+\kappa) r_0$, for $j=2,\cdots ,N$. 
Then, once again from the estimates for the ovals Lemma \ref{lm:minandmaxofovalskappa<1},
\begin{align*}
h(x,P_1,b_1)\leq \dfrac{b_1-\kappa|P_1|}{1-\kappa}=\sigma r_0
=
\dfrac{\sigma (1+\kappa) r_0}{1+\kappa}
=
\dfrac{b_j-\kappa|P_j|}{1+\kappa} \leq h(x,P_j,b_j),
\end{align*}
for $j\geq 2$ and we are done.
\end{proof}

\section{Abstract Algorithm}\label{sec:abstract algorithm}

We present here an algorithm, that in conjunction with the results previously obtained,  will be applied in Section \ref{sec:final solution of the refractor problem} to obtain a near field refractor satisfying \eqref{conditionfornumericalsolution}.  This type of algorithm has been used in \cite{B79}, \cite{CO08}, \cite{DGM17}, \cite{AG17}, and \cite{K14}.
Here the presentation is in an abstract setting so that it can be applied to solve other problems.

Let $G:\prod_{i=1}^N(\alpha_i,\beta_i)\to \R^N_{\geq 0}$ be a function, $G(\b)=\left(G_1(\b),\cdots ,G_N(\b) \right)$, $\b=(b_1,\cdots ,b_N)$, satisfying the following properties:
\begin{enumerate}
\item[(a)] $G$ is continuous on $\prod_{i=1}^N(\alpha_i,\beta_i)$;
\item[(b)] for each $1\leq i\leq N$, and $\alpha_i<s\leq t<\beta_i$
\[
G_i(b_1,\cdots,b_{i-1},t,b_{i+1},\cdots ,b_N)\leq G_i(b_1,\cdots,b_{i-1},s,b_{i+1},\cdots ,b_N),\text{ and}
\]
\[
G_j(b_1,\cdots,b_{i-1},t,b_{i+1},\cdots ,b_N)\geq G_j(b_1,\cdots,b_{i-1},s,b_{i+1},\cdots ,b_N)\qquad \forall j\neq i.
\]
\item[(c)] for each $1\leq i\leq N$ there is $C_i>0$ such that 
\[
\lim_{t\to \alpha_i^+}G_i(b_1,\cdots,b_{i-1},t,b_{i+1},\cdots ,b_N)=C_i
\]
for all $\b=(b_1,\cdots ,b_N)$.
\end{enumerate}

Let $f_1,\cdots ,f_N,\delta$ be positive numbers satisfying
\[
f_i-\delta>0\text{ and } C_i>f_i \text{ for }\quad i=2,\cdots ,N;
\]
and let us fix $b_1^0\in (\alpha_1,\beta_1)$ and define the set
\[
W=\left\{\b=(b_1^0,b_2,\cdots ,b_N)\in \prod_{i=1}^N(\alpha_i,\beta_i): G_i(\b)\leq f_i+\delta \text{ for $i=2,\cdots ,N$} \right\}.
\]
Our purpose is to present an iterative procedure to construct a vector ${\bf b}\in W$ so that
\begin{equation}\label{eq:desired approximation}
\left|G_i(\b)- f_i\right|<\delta \qquad \text{for $2\leq i\leq N$.}
\end{equation}
This will be done by successively decreasing the coordinates of the vectors involved.
In addition, we will show also that if the function $G$ satisfies a Lipschitz condition, then the procedure terminates in a finite number of iterations.

\subsection{Description of the algorithm}
Suppose $W\neq \emptyset$ and pick $\b^1\in W$. 
We will construct $N-1$ intermediate consecutive vectors $\b^2,\cdots ,\b^N$ associated with $\b^1$ in the following way.

{\bf Step 1.}
We first test if $\b^1=\(b_1^0,b_2,\cdots ,b_N\)$ satisfies the inequalities:
\begin{equation}\label{eq:testinequality}
f_2-\delta \leq G_2(\b^1)\leq f_2+\delta.
\end{equation}
Notice that the last inequality in \eqref{eq:testinequality} holds since $\b^1\in W$.
If $\b^1$ satisfies \eqref{eq:testinequality}, then we set $\b^2=\b^1$ and we proceed to Step 2 below.
If $\b^1$ does not satisfy \eqref{eq:testinequality}, then
\begin{equation}\label{eq:Glessthanf2-delta}
G_2(\b^1)<f_2-\delta. 
\end{equation}
We shall pick $b_2^*\in (\alpha_2,b_2)$, and leave all other components fixed, so that the new vector 
$\b^2=\(b_1^0,b_2^*,b_3,\cdots ,b_N\)\in W$, and satisfies
\begin{equation}\label{eq:Glessthanf2-deltastronger}
f_2\leq G_2(\b^2)\leq f_2+\delta.
\end{equation}
Let us see this is possible. From (b) above, and since $\b^1\in W$,
\[
G_j(b_1^0,t,b_3,\cdots,b_N)\leq G_j(b_1^0,b_2,b_3,\cdots ,b_N)\leq f_j+\delta \qquad \text{for $\alpha_2<t<b_2$ and $j\neq 2$.}
\]
From (c) above 
\[
\lim_{t\to \alpha_2^+}G_2(b_1^0,t,b_3,\cdots ,b_N)=C_2.
\]
From (a), $G_2(b_1^0,t,b_3,\cdots ,b_N)$ is continuous for $t\in (\alpha_2,b_2)$.
Since 
\[
C_2>f_2>f_2-\delta,
\] 
then by the intermediate value theorem there is $b_2^*\in (\alpha_2,b_2)$ such that 
\[
G_2(b_1^0,b_2^*,b_3,\cdots ,b_N)=f_2,
\]
and therefore \eqref{eq:Glessthanf2-deltastronger} holds and $\b^2\in W$.

Therefore, if the vector $\b^1$ does not satisfy \eqref{eq:testinequality}, we have then constructed a vector $\b^2\in W$ that satisfies \eqref{eq:Glessthanf2-deltastronger} which is stronger than \eqref{eq:testinequality}.
\newline
{\bf Step 2.}
Next we proceed to test the inequality 
\begin{equation}\label{eq:testinequality2}
f_3-\delta\leq G_3(\b^2)\leq f_3+\delta,
\end{equation}
with $\b^2$ the vector constructed in Step 1.
If $\b^2$ satisfies \eqref{eq:testinequality2}, we set $\b^3=\b^2$ and we proceed to the next step.
If $\b^2$ does not satisfy \eqref{eq:testinequality2}, then 
\[
G_3(\b^2)< f_3-\delta.
\]
From (a), (b), (c) above, we proceed as before, now decreasing the value of $b_3$, the third component of the vector $\b^2$, and since 
\[
C_3>f_3>f_3-\delta
\]
construct a vector $\b^3\in W$ such that 
\[
f_3\leq G_3(\b^3)\leq f_3+\delta,
\]
and in particular,
\eqref{eq:testinequality2} holds for $\b^3$.
Notice that because of condition (b) 
we cannot conclude that the newly constructed vector $\b^3$ satisfies \eqref{eq:testinequality}.
\newline

{\bf Step $N-1$.}
We proceed to test the inequality 
\begin{equation}\label{eq:testinequalityN-1}
f_N-\delta \leq G_N(\b^{N-1})\leq f_N+\delta,
\end{equation}
where $\b^{N-1}$ is the vector from Step $N-2$. If this holds we set $\b^N=\b^{N-1}$. Otherwise,  we have
\[
G_N(\b^{N-1})<f_N-\delta,
\]
and proceeding as before, by decreasing the $N$th-component of $\b^{N-1}$, we obtain a vector $\b^N\in W$
\[
f_N\leq G_N(\b^{N})\leq f_N+\delta,
\]
as long as 
\[
C_N>f_N>f_N-\delta.
\]
In this way, if 
\[
C_j>f_j>f_j-\delta\qquad j=2,\cdots ,N,
\]
starting from a fixed vector $\b^1\in W$, we have constructed intermediate vectors $\b^2,\cdots ,\b^N$ all belonging to $W$ and satisfying the inequalities:
\[
f_j-\delta\leq G_j(\b^j)\leq f_j+\delta\qquad j=2,\cdots ,N.
\]
Notice that if $\b^1 = \b^N$, then the vector $\b^1$ satisfies \ref{eq:desired approximation}. If not, we repeat the above steps starting with the last vector $\b^N.$

It is important to notice that by construction, the $\ell$-th components of $\b^{j-1}$ and $\b^j$ are all equal for $\ell\neq j$.
If for some $2\leq j\leq N$, $\b^{j-1}\neq \b^j$, then the $j$-th component of $\b^j$ is strictly less than the $j$-th component of $\b^{j-1}$. And so if we needed to decrease the $j$-th component of $\b^{j-1}$ to construct $\b^j$ it's because 
\[
G_j(\b^{j-1})<f_j-\delta,
\]
and then by construction $\b^j$ satisfies  
\[
f_j\leq G_j(\b^j)\leq f_j+\delta.
\]
Therefore combining the last two inequalities we obtain the following important inequality
\begin{equation}\label{eq:inequalityfordifferenceoftwobjs}
\delta< G_j(\b^j)-G_j(\b^{j-1}),\quad \text{for intermediate consecutive vectors $\b^j\neq \b^{j-1}$.}
\end{equation}

In summary, we started from a vector $\b^{1,1}\in W$ and constructed $N-1$ intermediate vectors $\b^{1,2},\cdots ,\b^{1,N}$ using the procedure described.
So we obtain in the first stage the finite sequence of vectors 
\[
\b^{1,1},\b^{1,2},\cdots , \b^{1,N}.
\]
For the second stage we repeat the construction now starting with the vector $\b^{1,N}$ and we get the finite sequence of vectors 
\[
\b^{2,1},\b^{2,2},\cdots , \b^{2,N}
\] 
with $\b^{2,1}=\b^{1,N}$.
For the third stage we repeat the process now starting with the last intermediate vector $\b^{2,N}$ obtained in the previous stage, obtaining the finite sequence of vectors 
\[
\b^{3,1},\b^{3,2},\cdots , \b^{3,N}
\]
with $\b^{3,1}=\b^{2,N}$.
Continuing in this way we obtain a sequence of vectors, in principle infinite,
\begin{equation}\label{eq:sequenceofvectorsintheiteration}
\b^{1,1},\cdots , \b^{1,N};\b^{2,1},\cdots ,\b^{2,N}; \b^{3,1},\cdots ,\b^{3,N};\cdots ;\b^{n,1},\cdots ,\b^{n,N};\b^{n+1,1},\cdots ,\b^{n+1,N};\cdots 
\end{equation}
with $\b^{2,1}=\b^{1,N},\b^{3,1}=\b^{2,N},\cdots ,\b^{n+1,1}=\b^{n,N},\cdots$.  
If for some $n$, the vectors in the $n$th-stage are all equal, i.e., 
$\b^{n,1}=\b^{n,2}=\cdots =\b^{n,N}:=\b^n$, then 
from the construction
\[
|G_j(\b^n)- f_j|\leq \delta,\quad \text{for $2\leq j\leq N$.} 
\]
Therefore, if we show that for some $n$ the intermediate vectors $\b^{n,1},\b^{n,2},\cdots,\b^{n,N}$ are all equal, we obtain the desired approximation \eqref{eq:desired approximation}.

Let us see what happens for $G_1$.
Suppose\footnote[1]{\label{footnote:application near field}For the application to the near field refractor $G_i({\mathbf b})=\int_{\mathcal T_{\mathcal S({\bf b})}(P_i)}
f(x)\,d\sigma(x)$ so \eqref{eq:conservationofenergyforvectorsinW} holds for each vector ${\mathbf b}$ because
\begin{align*}
&\sum_{i=1}^N G_i(\b)\\
&=\sum_{i=1}^N \int_{\mathcal T_{\mathcal S({\bf b})}(P_i)}
f(x)\,d\sigma(x)\\
&=\int_{\cup_{i=1}^N\mathcal T_{\mathcal S({\bf b})}(P_i)}
f(x)\,d\sigma(x)\quad \text{since $\mathcal T_{\mathcal S({\bf b})}(P_i)\cap \mathcal T_{\mathcal S({\bf b})}(P_j)$ has measure zero for 
$i\neq j$ and $f>0$ a.e.}\\
&=\int_\Omega f(x)\,d\sigma(x)=f_1+\cdots +f_N\quad \text{from the energy conservation assumption.}
\end{align*} }
\begin{equation}\label{eq:conservationofenergyforvectorsinW}
\sum_{i=1}^N G_i(\b^n) = \sum_{i=1}^N f_i.
\end{equation}
Then
\begin{align*}
|f_1-G_1(\b^n)|
&= \left|\sum_{j=2}^NG_j(\b^n) - f_j\right| 
\leq \sum_{j=2}^N |G_j(\b^n) - f_j| \leq N\,\delta.
\end{align*}
Therefore the vector $\b^n$ satisfies 
\[
|G_j(\b^n)- f_j|\leq \delta,\quad \text{for $2\leq j\leq N$, and $|G_1(\b^n)- f_1|<N\delta$}. 
\] 

Summarizing, if $G_i$ satisfy (a), (b), (c), \eqref{eq:conservationofenergyforvectorsinW} and
\[
C_j>f_j>f_j-\delta\qquad j=2,\cdots ,N,
\]
choosing $\delta = \epsilon/N$, we then obtain
\[
|G_j(\b^n)- f_j|\leq \epsilon,\quad \text{for $1\leq j\leq N$.} 
\]

\subsection{Convergence of the algorithm}
We will show here that the procedure described will always give, in an infinite number of steps, a vector ${\bf b}\in W$ satisfying \eqref{eq:desired approximation} 
provided the following holds:\footnote{We remark that for the application to the near field refractor $\alpha=\dfrac{1-\kappa}{1+\kappa}\,\(b_1^0-\kappa |P_1|\)$, see Proposition \ref{prop:lower estimate of bi}.} \begin{equation}\label{eq:lowerboundforall vectors in W}
 \text{there exists $\alpha>0$ such that if $\(b_1^0,b_2,\cdots ,b_N\)\in W$, then $b_j\geq \alpha_j+\alpha$ for all $2\leq j\leq N$.}
 \end{equation}

As pointed out in \eqref{eq:sequenceofvectorsintheiteration}, by using the procedure described above we obtain a sequence of vectors
\[
\b^{n,\ell} = \(b_1^0,b_2^{n,\ell},b_3^{n,\ell},\cdots ,b_N^{n,\ell}\)
\]
$n \in \mathbb N$ and $1 \leq \ell \leq N$ which can be listed as
\begin{equation*}
\b^{1,1},\cdots , \b^{1,N};\,\b^{2,1},\cdots ,\b^{2,N};\, \b^{3,1},\cdots ,\b^{3,N};\cdots ;\,\b^{n,1},\cdots ,\b^{n,N};\,\b^{n+1,1},\cdots ,\b^{n+1,N};\cdots 
\end{equation*}
Notice that in this listing, for a fixed $j,$ $2 \leq j \leq N$ the sequence $\{b^{n,\ell}_{j}\}$ of the $j^{th}$ entries is non-increasing;  that is,
\[
b_{j}^{n,\ell}\geq b_{j}^{m,k},\,\text{ for $n \leq m$ \, or for $n=m$ and $\ell \leq k$}
\]
and for $j =1,$ we have $b_{j}^{n,\ell}= b_1^0$ for all $n$ and for all $\ell.$
Moreover, since the vectors belong to $W,$ by assumption \eqref{eq:lowerboundforall vectors in W}, each  $j^{th}$ entry is bigger than or equal to $\alpha_j+\alpha$ for $2\leq j\leq N$. Therefore for any $j$ the limit of the $j^{th}$ entries exists and the limit is strictly bigger than $\alpha_j$.

Let $b^\infty_j$ be the limit of the $j^{th}$ entries, $j\geq 2$. 
Then the vector 
\[
\b^\infty=\(b_1^0,b^\infty_2,b^\infty_3,\cdots , b^\infty_N\)\in \{b_1^0\}\times \prod_{i=2}^\infty (\alpha_i,\beta_i)
\]
satisfies 
\begin{equation}\label{eq:limitinginequalityforbinfty}
f_j-\delta \leq G_j(\b^\infty )\leq f_j+\delta, \,j=2,\cdots ,N.
\end{equation}
In fact, fix $2\leq j\leq N$, the vector $\b^\infty$ is the limit of the vectors 
$\b^{i,j}$ as $i\to \infty$. But the vectors $\b^{i,j}$ verify 
\[
f_j-\delta \leq G_j(\b^{i,j})\leq f_j+\delta, \text{ for $i=1,2,\cdots $}.
\]
From assumption (a), $G_j$ is continuous for each $j$, taking the limit as $i\to \infty$ we obtain \eqref{eq:limitinginequalityforbinfty}. 

Assuming \eqref{eq:conservationofenergyforvectorsinW} for all vectors $\b\in W$, we conclude that \eqref{eq:limitinginequalityforbinfty} holds with $j=1$ and with $\delta$ replaced by $N\delta$.

\begin{remark}Notice that the argument above always gives a solution $\(b_1^0,b_2,\cdots ,b_N\)$ satisfying \eqref{eq:limitinginequalityforbinfty}.
To handle the case when $j=1$ we need an extra condition.
In fact, for \eqref{eq:limitinginequalityforbinfty}  
to hold for $j=1$, the conservation of energy condition \eqref{eq:conservationofenergyforvectorsinW} is sufficient.

Also notice that if the conservation of energy condition \eqref{eq:conservationofenergyforvectorsinW} is assumed, then the second condition in (b) implies the first condition in (b).
This is all applicable to the near field refractor in view of the Footnote \ref{footnote:application near field} before \eqref{eq:conservationofenergyforvectorsinW}.
\end{remark}

\subsection{If $G$ satisfies a Lipschitz estimate then the algorithm terminates in a finite number of steps}\label{subsec:algorithm converges in a finite number of steps}

Suppose that given $\delta_1,\cdots ,\delta_N$ positive there is a constant $M>0$ such that
\begin{align}\label{eq:estimateofGiisimpler}
G_i(\b-t\,\e_i) - G_i(\b)   
&\leq M\,
\,t,
\end{align}
for $\alpha_i< b_i-t\leq b_i\leq \beta_i$ and for each $\b\in \prod_{i=1}^N [\alpha_i+\delta_i,\beta_i]$ and for all $1\leq i\leq N$. 
Notice that from assumption (b) above, $G_i(\b-t\,\e_i) - G_i(\b)\geq 0$. 

We shall prove that the estimate \eqref{eq:estimateofGiisimpler} together with the assumption 
that $W$ satisfies \eqref{eq:lowerboundforall vectors in W},
implies that there is $n$ such that the vectors in the $n$th group $\b^{n,1},\b^{n,2},\cdots,\b^{n,N}$ are all equal, and we will also show an upper bound for the number of iterations.

Suppose we originate the iteration at $\b^0=\(b_1^0,b_2^0,\cdots ,b_N^0\)  \in W.$ 
Since by construction the coordinates of the vectors in the sequence \eqref{eq:sequenceofvectorsintheiteration} are decreased or kept constant, the $j$th coordinate of any vector in the sequence is less than or equal to $b_j^0$, $1\leq j\leq N$.  
In addition, from \eqref{eq:lowerboundforall vectors in W}, points in $W$ have first coordinate $b_1^0$ and their coordinates  bounded  below by $\alpha_j+\alpha$ for $j\geq 2$. Therefore, all terms in the sequence \eqref{eq:sequenceofvectorsintheiteration} are contained in the compact box $K=\{b_1^0\}\times \prod_{j=2}^N [\alpha_j+\alpha,b_j^0]$.
We want to show that there is $n_0$ such that the intermediate vectors $\b^{n_0,1},\b^{n_0,2},\cdots ,\b^{n_0,N}$ are all equal.
Otherwise, for each $n$ the intermediate vectors $\b^{n,1},\b^{n,2},\cdots ,\b^{n,N}$ are not all equal.
This implies that for each $n$ there are two consecutive intermediate vectors $\(b_1^0,b_2,b_3,\cdots,b_N\)$ and 
$\(b_1^0,\bar b_2,\bar b_3,\cdots,\bar b_N\)$, that are different. By construction of intermediate vectors, they can only differ in one coordinate, say that $b_j>\bar b_j$. Notice that $j$ depends on $n$, but there is $j$ and a subsequence $n_\ell$ such that there are two consecutive  intermediate vectors $\(b_1^0,b_2^{n_\ell},b_3^{n_\ell},\cdots,b_N^{n_\ell}\)$ and 
$\(b_1^0,\bar b_2^{n_\ell},\bar b_3^{n_\ell},\cdots,\bar b_N^{n_\ell}\)$ in each group 
$\b^{n_\ell,1},\cdots ,\b^{n_\ell,N}$ such that their $j$-th coordinates satisfy 
$b_j^{n_\ell}>\bar b_j^{n_\ell}$, and all other coordinates are equal. Also notice that since the coordinates are chosen in a decreasing manner we have $b_j^{n_\ell}>\bar b_j^{n_\ell}\geq b_j^{n_{\ell+1}}>\bar b_j^{n_{\ell+1}}$ for $\ell=1,\cdots $.
From \eqref{eq:inequalityfordifferenceoftwobjs} we then get
\begin{equation}\label{eq:Lipschitzbounddelta}
\delta
<G_j\left(b_1^0,\bar b_2^{n_\ell},\bar b_3^{n_\ell},\cdots,\bar b_N^{n_\ell}\right)-G_j\left(b_1^0,b_2^{n_\ell},b_3^{n_\ell},\cdots,b_N^{n_\ell}\right)=(*)
\end{equation}
for each $\ell\geq 1$. 
We write
\[
\(b_1^0,\bar b_2^{n_\ell},\bar b_3^{n_\ell},\cdots,\bar b_j^{n_\ell},\cdots \bar b_N^{n_\ell}\)
=
\(b_1^0,\bar b_2^{n_\ell},\bar b_3^{n_\ell},\cdots,b_j^{n_\ell}+\bar b_j^{n_\ell}-b_j^{n_\ell},\cdots \bar b_N^{n_\ell}\),
\]
and let $t:=\bar b_j^{n_\ell}-b_j^{n_\ell}<0$.
Since the vectors belong to $W$, from \eqref{eq:lowerboundforall vectors in W} $\bar b_j^{n_\ell}\geq \alpha_j+\alpha$ for $2\leq j\leq N$. Then from \eqref{eq:estimateofGiisimpler}
we obtain
\begin{equation}\label{eq:boundswithL}
(*)\leq -\left(\bar b_j^{n_\ell}-b_j^{n_\ell}\right)\,
M=M\,(b_j^{n_\ell}-\bar b_j^{n_\ell}),\qquad \forall \ell.
\end{equation}
On the other hand,
\begin{equation}\label{eq:estimateofsumofdifferencesofcoordinates}
\sum_{\ell=1}^\infty ( b_j^{n_\ell}-\bar b_j^{n_\ell})\leq b_j^0-(\alpha_j+\alpha),
\end{equation}
which contradicts \eqref{eq:Lipschitzbounddelta}. Therefore, the intermediate vectors $\b^{n_0,1},\b^{n_0,2},\cdots ,\b^{n_0,N}$ are all equal for some $n_0$.

Let us now estimate the number of iterations used.
Consider the groups of vectors \eqref{eq:sequenceofvectorsintheiteration} 
in the construction. We have proved the process stops at some $n_0$, i.e., all vectors in this group are equal.
Let us estimate $n_0$. 
Fix a coordinate $2\leq j\leq N$. Notice that in each group $k$, the $j^{th}$ coordinate of any vector in the group can decrease {\it at most only once} and only when passing from a vector $\b^{k,j-1}$ to the vector $\b^{k,j}$. Here $k$ denotes the group and $j$ the location in the group.
The $j^{th}$ coordinate of all vectors are at most $b_j^0$, the $j^{th}$ coordinate of the initial vector, and since the vectors belong to $W$ and so \eqref{eq:lowerboundforall vectors in W} holds, the $j^{th}$ coordinates are at least $\alpha_j+\alpha$.
So the change in the $j^{th}$ coordinate of a vector in the group one to the group $n_0$, is at most $b_j^0-\(\alpha_j+\alpha\)$.
On the other hand, on each group if the $j^{th}$ coordinate is decreased, from \eqref{eq:boundswithL},  it is decreased by at least $\dfrac{\delta}{M}$.
Having $n_0$ groups, the total decrease of the $j^{th}$ coordinate in passing from group one to group $n_0$ is at least 
\[
n_0\,\dfrac{\delta}{M},
\] 
which is in turn smaller than the total possible decrease, that is, we have
\[
n_0\,\dfrac{\delta}{M}\leq b_j^0-\(\alpha_j+\alpha\).
\]
Since this must hold for all the coordinates $2\leq j\leq N$ we obtain the bound
\[
n_0\leq \dfrac{M}{\delta} \max_{2\leq j\leq N}\left(b_j^0-\(\alpha_j+\alpha\) \right).
\]

\section{Application of the algorithm to the near field refractor}\label{sec:final solution of the refractor problem}
We set $G_i({\bf b})=H_i({\bf b})$ with $H_i$ given in \eqref{eq:definition of refractor measure Hi},
$\alpha_i=\kappa\,|P_i|$, and $\beta_i=(1+\kappa)\,r_0+\kappa\,|P_i|$, $1\leq i \leq N$. 
The continuity property (a) for $H_i$  is contained in the proof of Step 2 in \cite[Theorem 2.5]{GH14}. 
Properties (b) and (c) follow from Lemmas \ref{monotonicityoftrace} and \ref{forlarge_b_energyis0},
with $C_i=\int_\Omega f\,dx$.
The set $W=W_\delta$ in Proposition \ref{prop:lower estimate of bi}, and so assumption \eqref{eq:lowerboundforall vectors in W} is \eqref{eq:lower estimate for  bi} for the near field refractor problem. Finally, the Lipschitz estimate \eqref{eq:estimateofGiisimpler} follows applying Theorem \ref{thm:one sided Lipschitz estimate} with $\delta_1=b_1^0-\kappa\,|P_1|$ and 
$\delta_j=\alpha=\dfrac{1-\kappa}{1+\kappa}\,\(b_1^0-\kappa |P_1|\)$ for $j=2,\cdots ,N$. 
We then have everything in place to be able to apply the abstract algorithm to the near field refractor problem and we can obtain  the poly-oval refractor $\S(\b)$ that satisfies \ref{conditionfornumericalsolution}.

\section*{Acknowledgments}
It is a pleasure to thank Farhan Abedin for a careful reading of this paper and for very useful suggestions.


\begin{thebibliography}{9}
\bibitem[AG17]{AG17}  F. Abedin and C. E. Guti\'errez,  {\em An iterative method for generated Jacobian equations,} Calc. Var. PDEs, 56(101), 1-14, (2017).
\bibitem[B79]{B79} D. I. Bertsekas, {\em A distributed algorithm for the assignment problem}, Laboratory for Information and Decision Sciences Working Paper, MIT, Cambridge, Mass., March 1979.
\bibitem[BW80]{BW80} M. Born, and E. Wolf, {\em Principles of Optics,} Sixth ed., Pergamon Press, 1980.
\bibitem[CKO99]{CKO99} L.A. Caffarelli, S.A. Kochengin, and V. Oliker,  {\em On the numerical solution of the problem of reflector design with given far-field
scattering data}, Contemp. Math., 226, 13--32, 1999.
\bibitem[DGM17]{DGM17} R. De Leo, C. E. Guti\'{e}rrez\ and\ H. Mawi, {\em On the numerical solution of the far field refractor problem}, Nonlinear Anal. {\bf 157} (2017), 123--145. 
\bibitem[EvG92]{EG} L. C. Evans and R. F. Gariepy,   {\em Measure Theory and Fine Properties of Functions,} CRC Press, Boca Raton, FL, 1992.
\bibitem[Gu01]{Gu01} C. E. Guti\'errez, {\em The Monge-Amp\`ere Equation,} Birkha\"user, 2nd ed., 2016.
\bibitem[GH09]{GH09} C. E. Guti\'errez  and  Q. Huang,  {\em The refractor problem in reshaping light beams,} Arch. Rational Mech. Anal, 193(2009).
\bibitem[GH14]{GH14}   C. E. Guti\'errez  and  Q. Huang, {\em The near field refractor} , Annales de L'Institut Henri Poincar\'e (C) Analyse Non Lin\'eaire, vol. 31 (4), pp. 655-684, 2014.
\bibitem[GM10]{GM10} C. E. Guti\'{e}rrez\ and\ H. Mawi, {\em The refractor problem with loss of energy}, Nonlinear Anal. {\bf 82} (2013), 12--46.
\bibitem[K14]{K14} J. Kitagawa.  {\em An iterative scheme for solving the optimal
transportation problem,}  Calc. Var. PDEs (2014) 51:243--263.
\bibitem[KK65]{KK} M. Kline  and  I. Kay, {\em Electromagnetic theory and Geometrical Optics}, 1965, J Wiley and Sons, Inc.
\bibitem[MTW05]{MTW05} X.-N. Ma, N. Trudinger, and X.-J. Wang, {\em Regularity of potential functions of the optimal transportation problem}, Arch. Rational Mech. Anal. 177(2), 151-183, 2005.
\end{thebibliography}
\end{document}